\theoremstyle{plain}                        %theorem environments
\newtheorem{Thm}{Theorem}[section]
\newtheorem{Cor}[Thm]{Corollary}
\theoremstyle{definition}
\newtheorem{Rem}[Thm]{Remark}
\numberwithin{equation}{section}
\DeclareMathOperator{\id}{id}                   %%operators
\DeclareMathOperator{\pr}{pr}
\newcommand{\col}{\colon}
\newcommand{\B}{\mathbb{B}}
\newcommand{\E}{\mathbb{E}}
\newcommand{\G}{\mathbb{G}}
\newcommand{\N}{\mathbb{N}}
\newcommand{\R}{\mathbb{R}}
\newcommand{\cA}{\mathcal{A}}
\newcommand{\cC}{\mathcal{C}}
\newcommand{\cG}{\mathcal{G}}
                                                        \newcommand{\cL}{\mathcal{L}}                                                         \newcommand{\cP}{\mathcal{P}}
\newcommand{\cU}{\mathcal{U}}
\newcommand{\ra}{\rightarrow}             %arrows
\newcommand{\lra}{\longrightarrow}
\newcommand{\xra}{\xrightarrow}
\newcommand{\fa}{\phi_{\gra}}
\newcommand{\fb}{\phi_{\grb}}
\newcommand{\gra}{\alpha}
\newcommand{\grb}{\beta}
\newcommand{\Grc}{\Gamma}
\newcommand{\grd}{\delta}
\newcommand{\grl}{\lambda}
\newcommand{\grL}{\Lambda}
\newcommand{\grs}{\sigma}
\newcommand{\Om}{\Omega^1_B}
\newcommand{\OmG}{\Omega^1_B(\G)}
\newcommand{\OmL}{\Omega^1(\cL)}
\newcommand{\sst}{\subseteq}
\newcommand{\Ad}{\mathrm{Ad}}
\newcommand{\Aut}{\mathrm{Aut}}
\newcommand{\ga}{g_{\gra}}
\newcommand{\gb}{g_{\grb}}
\newcommand{\ha}{h_{\gra}}
\newcommand{\hb}{h_{\grb}}
\newcommand{\ka}{k_{\gra}}
\newcommand{\xa}{\chi_{\gra}}
\newcommand{\gab}{g_{\gra\grb}}
\newcommand{\hab}{h_{\gra\grb}}
\newcommand{\oa}{\omega_{\gra}}
\newcommand{\ob}{\omega_{\grb}}
\newcommand{\ta}{\theta_{\gra}}
\newcommand{\tb}{\theta_{\grb}}
\newcommand{\ophi}{\overline{\phi}}
\newcommand{\sa}{s_{\gra}}
\newcommand{\sbe}{s_{\grb}}
\newcommand{\tea}{t_{\gra}}
\newcommand{\teb}{t_{\grb}}
\newcommand{\Ua}{U_{\gra}}
\newcommand{\Uab}{U_{\gra\grb}}
\newcommand{\ffh}{(f,\varphi,h)}             %principal bundles
\newcommand{\ffi}{(f,\varphi,\id_B)}
\newcommand{\pfb}{(P,G,B,\pi)}     %principal bundles
\newcommand{\qfb}{(Q,H,B',\pi')}
\newcommand{\wt}{\widetilde}
\newcommand{\vpl}{\varprojlim}
\begin{document}

\title{Local connection forms revisited}

\author{Efstathios Vassiliou}

\address{Department of Mathematics\\ University of Athens\\
Panepistimiopolis, Athens 15784, Greece}
\email{evassil@math.uoa.gr}

\subjclass[2010]{Primary: 53C05, 58A30. Secondary: 18F15.}
\keywords{Principal bundles, connections,
connection forms, local connection forms, mappings of connections, associated bundles, principal sheaves.}

\begin{abstract}\noindent
Local connection forms provide a very useful tool for handling connections on principal bundles, because they ignore any complexities of the total space and, essentially, involve only two fundamental features of the structure group, namely the adjoint representation and the left (logarithmic) differential. The main results of this note characterize connections related together by bundle morphisms, while applications (taken from various sources) refer to connections on (Banach) associated bundles, in particular vector bundles, and connections on inverse limit bundles (in the Fr\'echet framework). The role of local connection  forms is further illustrated by their sheaf-theoretic globalization, resulting in a sheaf operator-like approach to principal connections. The latter point of view is naturally leading to a theory of connections on abstract principal sheaves.
\end{abstract}

%\today

\maketitle \thispagestyle{empty}

\section*{Introduction}                      %Introduction

\noindent In this note we are concerned with local connection forms  and their role in certain aspects of the theory of connections on principal bundles.

In the first place, we are dealing with connections mutually related by principal bundle morphisms. More precisely, let $\ffh$ be a morphism between the principal bundles $P\equiv\pfb$ and $Q\equiv\qfb$. Two connections $\omega$ and  $\theta$ on $P$ and $Q$, respectively, are said to be $\ffh$-related (or conjugate) if, roughly speaking, the morphism preserves the horizontal subspaces. Equivalent conditions are expressed also in terms of splittings of exact sequences and connection forms. Here our attention is focused on conditions involving local connection forms. This point of view is quite advantageous within the framework of infinite-dimensional bundles (e.g. Banach bundles, or Fr\'echet bundles derived as projective limits of Banach bundles), since local forms circumvent the (occasionally complicated) structure of the total space of the bundle, and involve only the base space and the structure group of the bundle. The latter group is essentially manifested by two of its fundamental features, namely the adjoint representation and the left  (Maurer-Cartan or logarithmic) differential.

The first three sections of the note are of preparatory character and provide an expository summary of connections on Banach principal bundles. The main results are given in Section~\ref{S4} and characterize various cases of related connections in terms of local connection forms. For the sake of completeness, Section~\ref{S5} is devoted to applications referring to connections on associated principal and vector bundles (\S\S\,\ref{Ss51}--\ref{Ss52}), while \S\,\ref{Ss53} is dealing with connections on Fr\'echet principal bundles, the latter obtained as projective limits of Banach principal bundles in the sense of \cite{Dod-Gal-Vas}, \cite{Gal2}. Finally, the significance of local connection forms is further illustrated by their sheaf-theoretic globalization (see \S\,\ref{Ss54}).  By virtue of this approach, ordinary principal connections correspond to sheaf-theoretic operators, in analogy to the sheaf-theoretic description of connections on vector bundles (cf., for instance, \cite{Wells}). Accordingly, the aforementioned criteria of relatedness acquire analogous sheaf-theoretic expressions. This point of view naturally leads to an abstract theory of connections on Grothendieck's principal sheaves (\cite{Grothendieck}), fully treated in \cite{Vas5}.

\section{Preliminaries} \label{S1}                 %section 1

Unless otherwise stated, all the manifolds and bundles mentioned throughout are modelled on Banach spaces, and differentiability is assumed to be of class $\cC^{\infty}$ (smoothness). For the basic definitions, notations, and terminology we mainly refer to Bourbaki \cite{Bourbaki1} and Lang \cite{Lang}.

A principal bundle will be denoted by $\pfb$, where the structure group $G$ acts on the total space $P$ from the right. Clearly $\pi\col P \ra B$ is the projection. If there is no danger of confusion, $\pfb$ will be simply denoted by $P$.

To a principal bundle $P$, as before, we associate the exact sequence of vector bundles (over $P$):
\begin{equation}   \label{Eq11}                 %(1.1)
   0\lra P\times \mathfrak{g} \xra{\;\;\nu\;\;} TP \xra{\;\;T\pi !\;\;}\pi^*(TB)\lra 0
\end{equation}

\noindent Here, $P\times \mathfrak{g}$ is the trivial vector bundle of fiber type $\mathfrak{g}$, where $\mathfrak{g}\equiv T_eG$ denotes the Lie algebra of $G$. $TP$ is the tangent bundle of $P$ and $\pi^*(TB)$ the pull-back of $TB$ by $\pi$.

The vector bundle morphism (\emph{vb-morphism}, for short) $\nu$ is defined by
\begin{equation}   \label{nu}                       %(1.2)
    \nu(p,X):=X^*_p; \qquad (p,X)\in P\times \mathfrak{g},
\end{equation}
where $X^*$ is the \emph{fundamental (Killing) vector field} corresponding to $X\equiv X_e\in \mathfrak{g}$ by the (right) action $\grd \col P\times G\ra G$. We recall that
\begin{equation}  \label{Eq13}                     %(1.3)
          X^*_p:= T_e\wt{p}(X_e):= T_e\grd_p(X_e).
\end{equation}
Note that, for every $p\in P$, the partial map $\wt p := \grd_p\col G\ra \pi^{-1}(x)$
is a diffeomorphism, with $x=\pi(p)$. The morphism $\nu$ is an immersion and its image $\nu(P\times \mathfrak{g})=VP$ is the \emph{vertical subbundle} of the tangent bundle $(TP,\tau_P,P)$. The fiber $V_pP$ is the \emph{vertical} subspace of $T_pP$, given by
\begin{equation} \label{Eq14}                      %(1.4)
  V_pP \equiv (VP)_p = \ker(T_p\pi !) = \ker(T_p\pi) =
                                            T_p(\pi^{-1}(x)).
\end{equation}

The vb-morphism $T\pi!=(\tau_P,T\pi)$ is induced by the universal property of the pull-back, as pictured in the next diagram.

\begin{diagram}                             \label{Dgr1}
   TP &                   &            &              &    \\
   &\rdTo~{\rotatebox{45}{$T\pi$\! !}}\rdTo(4,2)^{\raisebox{-1ex}
                                {\rotatebox{20}{$T\pi$}}}
   \rdTo(2,4)_{\rotatebox{60}{$\tau_{\mbox{\tiny$P$}}$}}& &  &  \\
    &         &\pi^*(TB)&\rTo_{\qquad \pi^* =\pr_2\quad\qquad} &  TB  \\
    &       &         \dTo_{\tau_{\mbox{\tiny$B$}}^*=\pr_1} &
         &\dTo_{\tau_{\mbox{\tiny$B$}}}  \\
    &       &        P  & \rTo^\pi  &  B
\end{diagram}

\medskip\centerline{{\sc Diagram~1}}

\bigskip The structure group $G$ acts naturally on the bundles of \eqref{Eq11}. More precisely, for every $g\in G$,
\begin{equation}  \label{Eq15}                  %(1.5)
\begin{aligned}
    (p,X)\cdot g &:= \left(pg, \Ad(g^{-1})(X)\right); &
                    \quad (p,X)& \in P\times \mathfrak{g},\\
          u\cdot g &:= TR_g(u); & \quad u &\in TP,\\                            (p,v)\cdot g &:= (pg, v); & \quad (p,v)&\in \pi^*(TB).
\end{aligned}
\end{equation}

\noindent  As usual,$R_g\col P\ra P$ denotes the right translation of $P$ by $g\in G$, i.e. $R_g(p) := \grd(p,g)\equiv p\cdot g\equiv pg$. Therefore, taking into account the definition of $T\pi !$,  it follows that the vb-morphisms $\nu$ and $T\pi !$  are \emph{ $G$-equivariant}; in other words,
\begin{equation}  \label{Eq16}                   %(1.6)
  \begin{gathered}
   \nu\big((p,X_e)\cdot g\big) = T_pR_g\big((\nu(p,X_e)\big)=
                                          \nu(p,X_e)\cdot g, \\
   T\pi !(u\cdot g) = \big(pg, T\pi(u)\big)=
                 \big(p, T\pi(u)\big)\cdot g = T\pi!(u)\cdot g,
  \end{gathered}
\end{equation}
for every $(p,X_e)\in P\times \mathfrak{g}$, $u\in T_pP$ and every $g\in G$.

The previous considerations characterize \eqref{Eq11} as an (exact) sequence of \emph{$G$-vector bundles}.

\section{Connections on principal bundles}\label{S2} %section 2

Let $P\equiv\pfb$ be a principal bundle. A \emph{connection} on $P$ is a $G$-splitting of the exact sequence \eqref{Eq11}; that is, an exact sequence of $G$-vector bundles
\begin{equation} \label{Eq21}                     %(2.1)
0\lra \pi^*(TB) \xra{\,\;\;C \;\;} TE \xra{\,\;\;V\;\;}
                                     P\times\mathfrak{g} \lra  0
\end{equation}
such that $C$ and $V$ are $G$-equivariant morphisms satisfying
\begin{equation}        \label{Eq22}              %(2.2)
    T\pi !\circ C = \id_{\pi^*(TB)},
                                 \quad V\circ \nu = \id_{VE}
\end{equation}
(see also \cite{Penot}). Of course, it suffices to know either $C$ or $V$ to determine the splitting sequence \eqref{Eq21}. The next diagram illustrates the previous definition.

\begin{diagram}                     \label{Dgr2}
   &      &    &       &  0   &      &      &       & \\
   &      &    &       &  \dTo_{}    &      &       &   & \\
   &      &    &       &  \pi^*(TB)  &      &       &   & \\
   &      &    &       & \dTo_{C}    &
    \rdTo^{\rotatebox{40}{\raisebox{-15ex}{$\!\id_{\pi^*(TB)}$}}}
                                             &       &    & \\
 0 & \rTo^{} & P\times \mathfrak{g} & \rTo^{\;\;\nu\;\;} & TP  &
                                       \rTo^{\;\;T\pi !\quad} &
                                \pi^*(TB) & \rTo^{}   & 0   \\
   &      &    & \rdTo_{\rotatebox{45}{$\id_{P\times \mathfrak{g}}$}}   &
                       \dTo_{V}   &       &    &     & \\
   &      &    &       & P\times \mathfrak{g}  &  &       &   & \\                                     &      &    &       &  \dTo^{}     &      &       &   & \\
   &      &    &       &  0           &      &       &   &
\end{diagram}

\medskip\centerline{{\sc Diagram~2}}

\bigskip The splitting determines the  \emph{horizontal subbundle} $HE := \mathrm{Im}(C)$ of $TP$  completing the vertical subbundle, i.e.
\begin{equation} \label{Eq23}                      %(2.3)
                   TP = VP \oplus HP.
\end{equation}

\medskip\noindent It follows that      %(2.4)-(2.5)
\begin{gather}
   HP =C\left(\pi^*(TB)\right) = \ker V, \label{Eq24}\\
   T_pR_g(H_pP) = H_{pg}P,   \label{Eq25}
\end{gather}
for every $p\in P$ and $g\in G$.  Moreover, from the definition of $T\pi!$ and the first of \eqref{Eq22}, we see that the horizontal component $u^h$ of every $u\in TP$ has the form
\begin{equation}  \label{Eq26}                      %(2.6)
    u^h = C\big(\tau_{\mbox{\tiny $P$}}(u),T\pi(u)\big),
\end{equation}
after the decomposition \eqref{Eq23}.

\smallskip Equivalently, connections on principal bundles are defined by global connection forms. First we recall that, for a principal bundle as before, $L(TP,P\times \mathfrak{g})$ denotes the linear map bundle, whose fiber over a point $p\in P$ is the (Banach) space of continuous linear maps $\cL(T_pP,\mathfrak{g})$. The smooth sections of the previous bundle are the \emph{$\mathfrak{g}$-valued 1-forms} on $P$, whose set is usually denoted by $\grL^1(P,\mathfrak{g})$.

 A \emph{connection form} of $P$ is an element $\omega \in \grL^1(P,\mathfrak{g})$ satisfying the following conditions:
\begin{gather}
\omega(X^{*}) = X; \qquad X\in \mathfrak{g},
                                  \tag{$\omega.~1$}\label{w1} \\
 R^{*}_{g}\omega = \Ad(g^{-1}).\omega; \qquad g\in G.
                                  \tag{$\omega.~2$} \label{w2}
\end{gather}
Evaluated at any $p\in P$, the preceding equalities  yield respectively:
\begin{gather*}
  \omega_p(X^*_p)= X\equiv X_e,\\
  \omega_{pg}(T_pR_g(u)) = \Ad(g^{-1}).\omega_p(u),
\end{gather*}
for every $X\in \mathfrak{g}$, $g\in G$ and $u\in T_pP$. Line dots, as in the last equality, quite often replace consecutive parentheses and should not be confused with center dots customarily indicating the result of a group multiplication or action.

A connection form $\omega$ is related with the morphism $V$ of the splitting sequence \eqref{Eq21} by
\[
      \omega_p(u) = (\pr_2\circ V)(u); \qquad p\in P,\, u\in T_pP.
\]
Therefore, \eqref{Eq24} implies that
\begin{equation} \label{Eq27}                        %(2.7)
    H_pP = \ker \omega_p, \qquad p\in P.
\end{equation}

If $u\in T_pP$, then we write $u=u^v+u^h$, with $u^v$ denoting the vertical components of $u$. If we set $\omega_p(u)=A$, then \eqref{Eq27} implies that $\omega_p(u^v)=A$. Since $\omega_p(A^*_p) = A$, it follows that $\omega_p(A^*_p-u^v)=0$, or $V_pP\ni A^*_p-u^v\in H_pP$, thus $A^*_p=u^v$. In a descriptive way (see also \cite{Kob-Nom}),  $\omega_p(u)$ identifies with the element of $\mathfrak{g}$ whose corresponding fundamental vector field  coincides, at $p$, with the vertical component of $u$.

Finally, $\omega$ is related with the splitting morphism $C$ by
\begin{equation} \label{Eq28}                        %(2.8)
    C(p,v) = u-\nu(\omega_p(u)); \qquad (p,v)\in \pi^*(TB),
\end{equation}
for any $u\in T_pP$ with $T_p\pi(u)=v$.

\section{Local connection forms} \label{S3}     %section 3

As mentioned in the introduction, local connection forms are very convenient because they involve only the base space and the structure group of the bundle. To define them we need a few notations regarding the local structure of a principal bundle.

Let $\cC= \{(U_{\gra},\Phi_{\gra})\,|\, \gra\in I\}$ be a trivializing cover of $\pfb$, where  $\Phi_{\gra}\col \pi^{-1}(U_{\gra})\ra U\times G$ is a $G$-equivariant diffeomorphism.  The \emph{natural sections} (with respect to $\cC$) are defined to be the smooth maps $\sa(x):=\Phi^{-1}_{\gra}(x,e)$, for every $x\in U_{\gra}$. The corresponding \emph{transition functions}, denoted by
$\gab \col\Uab \ra G$ ($\Uab:=U_{\gra}\cap U_{\grb} $), are given by
$\sbe=\sa\cdot \gab$, for all $\gra,\grb\in I$.

Assume now that $P$ admits a connection with connection form  $\omega\in \grL^1(P,\mathfrak{g})$. Then the 1-forms
\begin{equation}  \label{locconforms}         %(3.1)
  \oa := \sa^*\omega \in \grL^1(U_{\gra},\mathfrak{g});
                                           \qquad \gra \in I,
\end{equation}
are the \emph{local connection forms} of the given connection $(\equiv \omega)$, with respect to $\cC$. Therefore,
\[
    \omega_{\gra,x}(v) = \omega_{\sa(x)}(T_x\sa(v));
                  \qquad x\in U_{\gra},\,  v\in T_xU_{\gra}\equiv T_xB.
\]
The forms $\{\omega_{\gra}\}$ satisfy the compatibility condition
\begin{equation}  \label{loccompcond}        %(3.2)
 \ob = \Ad(\gab^{-1}).\oa + \gab^{-1}d\gab; \qquad \gra,\grb \in I.
\end{equation}
over $\Uab$. The second summand on the right-hand side of \eqref{loccompcond} is the \emph{left Maurer-Cartan differential} of $\gab\in \cC^{\infty}(U_{\gra\grb},G)$.

For later use, it is useful to recall that the left Maurer-Cartan differential of a smooth map $f\col U\ra G$ ($U\sst B$ open) is defined by
\begin{equation} \label{Eq33}                   %(3.3)
    \left(f^{-1}df\right)_x(v):= \left(T_{f(x)}\grl_{f(x)^{-1}}
   \circ T_xf\right)(v); \qquad  x\in U,\, v\in T_xB,
\end{equation}
where $\grl_g\col G\ra G$ denotes the left translation of $G$ by $g\in G$. Concerning our terminology, this is justified by equality
\begin{equation}    \label{Eq34}                %(3.4)
    f^{-1}df = f^*\gra^{\ell}
\end{equation}
where $\gra^{\ell}$ is the left Maurer-Cartan form of $G$.
Other terms in use for $f^{-1}df$ are \emph{left differential} (\cite[Ch.~III, \S\,3.17]{Bourbaki2}), \emph{logarithmic differential} (\cite[Ch.~VIII, \S\,38.1]{Mich}), or \emph{multiplicative differential} (\cite[Ch.~I, \S\,3]{Krein}) of $f$.

Conversely, let $\cC= \{(U_{\gra},\Phi_{\gra})\,|\, \gra\in I\}$ be a trivializing cover of a principal bundle $P$. A family of 1-forms $\{\oa \in  \grL^1(U_{\gra},\mathfrak{g})\,|\, \gra \in I\}$, satisfying the compatibility condition \eqref{loccompcond}, determines a unique connection form $\omega\in \grL^1(P,\mathfrak{g})$, whose local connection forms are precisely the given $\{\oa\}$. Briefly, the construction of $\omega$ goes as follows: For each $\gra\in I$, we define the map $\ga \col \pi^{-1}(U_{\gra}) \ra G$, \label{ga} by
setting $\ga(p) =(\pr_2\circ \Phi_{\gra})(p)$; hence, $\ga$ is a smooth map such that
\[
  p=\sa(\pi(p))\cdot \ga(p), \qquad  p\in \pi^{-1}(U_{\gra})
\]
(the preceding equality can be also used as a definition of $\ga$) . Then, for every $p\in P$ with $\pi(p)\in U_{\gra}$, and every $u\in T_pP$, we set
\begin{equation}   \label{Eq35}                    %(3.5)
 \omega_p(u):=\Ad\left(\ga(p)^{-1}\right)\!.(\pi^*\omega_{\gra})_p(u)
                       + \left(\ga^{-1}d\ga\right)_p(u).
\end{equation}
Condition \eqref{loccompcond} ensures that $\omega$ is a well defined element of $\grL^1(P,\mathfrak{g})$ which determines a connection form.

For the analog of \eqref{Eq35} in the case of a Lie group $G$ acting on the left of $P$ see \cite[p.~129]{Sulanke}. We also refer to  \cite[pp.~32--33]{Bleecker} and \cite[pp.~227--228]{Pham} for other ways to define $\omega$ from $\{\oa\}$.

The proof of \eqref{loccompcond} and the fact that \eqref{Eq35} is a well defined form satisfying \eqref{w1} -- \eqref{w2}, are based on certain computations which will be of frequent use below. More precisely, assume that $\grs$ and $s$ are two sections of $P$ over the same open subset $U$ of $B$. Then there is a unique smooth map $g\col U\ra G$ such that $\grs = sg = \grd \circ (s, g)$. Hence, for every $x\in U$ and $v\in T_xB$,
\begin{align*}
    T_x\grs(v) &= T_{s(x)}\grd_{g(x)}(T_xs(v)) +
                                T_{g(x)}\grd_{s(x)}(T_xg(v))\\
    &= T_{s(x)}R_{g(x)}(T_xs(v)) +
              \big(T_e\grd_{s(x)\cdot g(x)} \circ
                          T_{g(x)}\grl_{g(x)^{-1}}\big)(T_xg(v))\\
    &= T_{s(x)}R_{g(x)}(T_xs(v)) +
            T_e\grd_{\grs(x)}\big((g^{-1}dg)_x(v)\big).
\end{align*}
Setting $(g^{-1}dg)_x(v)=A\in \mathfrak{g}$, \eqref{Eq13} implies that
\[
T_e\grd_{\grs(x)}\big((g^{-1}dg)_x(v)\big)=A^*_{\grs(x)},
\]
consequently
\[
  T_x\grs(v) = T_{s(x)}R_{g(x)}(T_xs(v)) + A^*_{\grs(x)}.
\]
Applying $\omega$ to the latter equality, we obtain:
\begin{align*}
    \omega_{\grs(x)}(T_x\grs(v)) &=
           (R_{g(x)}^*\omega)_{s(x)}(T_x\grs(v))
                             + \omega_{\grs(x)}(A^*_{\grs(x)})\\
      &=  \Ad(g(x)^{-1}).\omega_{s(x)}(T_xs(v)) + A\\
      &= \Ad(g(x)^{-1}).\omega_{s(x)}(T_xs(v)) + (g^{-1}dg)_x(v).
\end{align*}
Equivalently, for every $x\in U$ and $v\in T_xB$,
\begin{equation} \label{Eq36}                     %(3.6)
   (\grs^*\omega)_x(v) = \Ad(g(x)^{-1}).(s^*\omega)_x(v) +
                                                 (g^{-1}dg)_x(v).
\end{equation}
In brief, we write:
\begin{equation} \label{Eq37}                     %(3.7)
      \grs^*\omega = \Ad(g^{-1}).s^*\omega + g^{-1}dg,
\end{equation}
for every $\grs,s\in \Gamma(U,P)$ with $\grs =sg$.

\section{Related connections}  \label{S4}   %section 4

We come now to the main theme and results of the present note.

\smallskip Let $P\equiv(P,G,B,\pi)$ and $Q\equiv(Q,H,B',\pi')$ be principal bundles endowed with the connections $\omega$ and $\theta$, respectively. We denote by  $C$, $V$ the splittings of the exact sequence \eqref{Eq21} corresponding to $P$ and $\omega$. Analogously, $C'$ and $V'$ are the splittings corresponding to $Q$ and $\theta$.

If $\ffh$ is a principal bundle morphism (\emph{pb-morphism}, for short) of $P$ into $Q$, we denote by $\overline\varphi \col \mathfrak{g}\ra\mathfrak{h}$ the Lie algebra morphism induced by $\varphi$. Then $\omega$ and $\theta$ are called \emph{$\ffh$-related} (or, \emph{$(f,\varphi,h)$-conjugate}) if one of the following equivalent conditions holds (see also \cite{Vas1}):
\begin{equation} \label{Eq41}                       %(4.1)
 \begin{gathered}
     f^*\theta = \overline\varphi \omega,\\
     Tf(u^v) = \left(Tf(u)\right)^{v'},\\
     Tf(u^h) = \left(Tf(u)\right)^{h'},\\
     V'\circ Tf = C\circ (f\times \overline\varphi),\\
     C'\circ (f\times Th) =  Tf \circ C,
\end{gathered}
\end{equation}
for every $u\in T_pP$. More explicitly, the first condition  means that
\[
  (f^*\theta)_p(u)=\theta_{f(p)}(T_pf(u)) =
  \overline\varphi(\omega_p(u)); \qquad p\in P,\, u\in T_pP.
\]
We recall that the superscripts $v$ and $h$ indicate, respectively, the vertical and horizontal components of any $u\in TP$, with respect to $\omega$. Analogous considerations hold for their dashed counterparts referring to $\theta$.

Assume now that $\pfb$ and $(Q,H,B,\pi')$ are principal bundles over the \emph{same base}, admitting the respective connections $\omega$ and $\theta$. Taking trivializations over the same open cover $\{U_{\gra}\,|\,\gra\in I\}$ of $B$, we obtain the natural local sections $\{\sa\}_{\gra\in I}$ and $\{\tea\}_{\gra\in I}$ of $P$ and $Q$, respectively, as  well as the local connection forms $\{\oa\}_{\gra\in I}$ and $\{\ta\}_{\gra\in I}$ corresponding to $\omega$ and $\theta$.

With these notations, we  prove the following criterion of relatedness of connections in terms of local connection forms:

\begin{Thm}\label{T41}                  %theorem 4.1
Let $\ffi$ be a pb-morphism of $P\equiv \pfb$ into $Q\equiv (Q,H,B,\pi')$.
Two connections $\omega$ and $\theta$ on $P$ and $Q$, respectively, are $(f,\varphi,\id_B)$-related if and only if
\begin{equation}   \label{Eq42}               %(4.2)
 \overline\varphi\omega_{\gra} = \Ad(h^{-1}_{\gra}).\theta_{\gra}
            + h^{-1}_{\gra}dh_{\gra}; \qquad \gra,\grb\in I,
\end{equation}
where $\{\ha\col U_{\gra} \ra H\,|\,\gra\in I\}$ are smooth maps satisfying equality
\[
   f\circ \sa = \tea\cdot \ha, \qquad  \gra\in I.
\]
\end{Thm}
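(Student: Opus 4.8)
The plan is to reduce the global relatedness condition $f^*\theta = \ovl\varphi\,\omega$ (the first of \eqref{Eq41}) to the local statement \eqref{Eq42} by pulling back along the natural sections, and conversely to reconstruct the global identity from the local ones using the formula \eqref{Eq35} (equivalently \eqref{Eq37}) that recovers $\omega$ and $\theta$ from their local data. First I would record the existence and smoothness of the maps $\ha\col\Ua\ra H$: since $f$ is a pb-morphism over $\id_B$, for $x\in\Ua$ both $(f\circ\sa)(x)$ and $\tea(x)$ lie in the same fibre $\pi'^{-1}(x)$, so there is a unique $\ha(x)\in H$ with $(f\circ\sa)(x) = \tea(x)\cdot\ha(x)$, and smoothness follows from the local triviality of $Q$ (writing $\ha = \mathrm{pr}_2\circ\Psi_\gra\circ f\circ\sa$ for a trivialization $\Psi_\gra$ of $Q$ over $\Ua$).

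For the forward direction, assume $f^*\theta = \ovl\varphi\,\omega$. Pull this back by $\sa$: $\sa^*(f^*\theta) = \sa^*(\ovl\varphi\,\omega) = \ovl\varphi\,(\sa^*\omega) = \ovl\varphi\,\oa$, using that $\ovl\varphi$ is a fixed linear (Lie algebra) map and hence commutes with pullback of forms. On the other hand $\sa^*(f^*\theta) = (f\circ\sa)^*\theta$, and by the defining relation $f\circ\sa = \tea\cdot\ha$ together with \eqref{Eq37} applied in the bundle $Q$ (with the two sections $\tea\cdot\ha$ and $\tea$ over $\Ua$, related by $\ha$), we get $(f\circ\sa)^*\theta = \Ad(\ha^{-1}).\,\tea^*\theta + \ha^{-1}d\ha = \Ad(\ha^{-1}).\,\ta + \ha^{-1}d\ha$. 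Equating the two expressions gives \eqref{Eq42}. Note that here one needs the slightly more general form of \eqref{Eq37} allowing a morphism of groups; but since $f\circ\sa$ and $\tea$ are sections of the \emph{same} bundle $Q$, the plain \eqref{Eq37} suffices, which is why restricting to $h=\id$ (so $H$ fixed) makes this step clean.

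For the converse, assume \eqref{Eq42} holds for all $\gra$. I would show $f^*\theta = \ovl\varphi\,\omega$ as $\mathfrak{h}$-valued $1$-forms on $P$ by checking equality on each $\pi^{-1}(\Ua)$, since these cover $P$. Fix $p\in\pi^{-1}(\Ua)$, write $x=\pi(p)$ and $p = \sa(x)\cdot\ga(p)$ with $\ga$ as on page~\pageref{ga}. Applying \eqref{Eq35} for $\omega$ on $P$ and the analogous formula for $\theta$ on $Q$ at the point $f(p) = (f\circ\sa)(x)\cdot(\text{something})$ — more precisely $f(p) = f(\sa(x))\cdot\ga(p) = \tea(x)\cdot(\ha(x)\ga(p))$ after using $\varphi$-equivariance of $f$, so that the "$Q$-coordinate" of $f(p)$ relative to $\tea$ is $\ha(x)\,\varphi(\ga(p))$ — I would expand both $\omega_p$ and $\theta_{f(p)}$ in terms of $\oa,\ta$, then substitute \eqref{Eq42}, and verify the two sides match, using the standard identities $\ovl\varphi\circ\Ad_G(g) = \Ad_H(\varphi(g))\circ\ovl\varphi$ and $\varphi^*\gra^\ell_H$-type relations, i.e. $\ovl\varphi(g^{-1}dg) = \varphi(g)^{-1}d(\varphi(g))$, together with the cocycle behaviour of $\ga$ and $\ha$. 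The main obstacle I anticipate is exactly this bookkeeping in the converse: correctly identifying the $\tea$-coordinate of $f(p)$ as $\ha(\pi(p))\cdot\varphi(\ga(p))$, and then pushing $\ovl\varphi$ through both the $\Ad$ term and the logarithmic-differential term of \eqref{Eq35} without sign or order errors — the algebra is routine but must be done carefully, and it is the only place where $\varphi$-equivariance of $f$ and the Lie-functoriality of $\ovl\varphi$ are genuinely used. An alternative, cleaner route for the converse is to avoid \eqref{Eq35} altogether: show that $f^*\theta$ and $\ovl\varphi\,\omega$ are two connection forms on $P$ (the latter because $\ovl\varphi$ intertwines the structures, the former because $f$ is a pb-morphism) whose local forms with respect to $\cC$ coincide by \eqref{Eq42}, and invoke the uniqueness clause in the "Conversely" paragraph of Section~\ref{S3}; I would actually present the proof this way, as it isolates the computation to a single application of \eqref{Eq37} in each direction.
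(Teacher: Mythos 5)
Your main line of argument is essentially the paper's own proof: the forward direction pulls back $f^*\theta=\ovl\varphi\,\omega$ by $\sa$ and applies \eqref{Eq37} to the pair of sections $f\circ\sa=\tea\cdot\ha$ and $\tea$ of $Q$, and the converse expands $\ovl\varphi(\omega_p(u))$ and $(f^*\theta)_p(u)$ via \eqref{Eq35}, using precisely the coordinate identity $\ka\circ f=(\ha\circ\pi)\cdot(\varphi\circ\ga)$ (the paper's \eqref{Eq48}) together with the intertwining identities \eqref{Eq45}--\eqref{Eq46} you list. (A trivial slip: $f(\sa(x)\cdot\ga(p))=f(\sa(x))\cdot\varphi(\ga(p))$, not $f(\sa(x))\cdot\ga(p)$; you correct this a line later.)

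Your proposed alternative for the converse --- noting that $f^*\theta$ and $\ovl\varphi\,\omega$ have the same pullbacks along every $\sa$ and appealing to uniqueness --- is genuinely different from the paper and does isolate the computation to one application of \eqref{Eq37} per direction, but the uniqueness clause of Section~\ref{S3} does not apply verbatim: that clause concerns $\mathfrak{g}$-valued forms on $P$ satisfying \eqref{w1}--\eqref{w2}, whereas $f^*\theta$ and $\ovl\varphi\,\omega$ are $\mathfrak{h}$-valued forms satisfying $\eta(X^*)=\ovl\varphi(X)$ and $R_g^*\eta=\Ad(\varphi(g)^{-1}).\eta$. To make this route airtight you should add the (easy) observation that any form in this larger class is still reconstructed from its section pullbacks by the analogue of \eqref{Eq35} --- equivalently, that the difference $f^*\theta-\ovl\varphi\,\omega$ vanishes on vertical vectors, is equivariant, and kills every $T_x\sa(v)$, hence is zero. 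With that one remark the alternative is correct and arguably cleaner than the paper's direct computation.
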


\begin{proof} Assume first that $\omega$ and $\theta$ are $(f,\varphi,\id_B)$-related, i.e. $f^*\theta = \overline\varphi \omega$.  Then, for every $x\in U_{\gra}$ and $v\in T_xB$,
the analog of \eqref{Eq36} for $f\circ \sa$ and $\tea$ yields
\begin{equation}  \label{Eq43}                %(4.3)
  \big(\sa^*(f^*\theta)\big)_x(v) =
    \Ad\left(\ha(x)^{-1}\right). \theta_{\gra,x}(v)+
                           \left(\ha^{-1}dh_{\gra}\right)_x(v).
\end{equation}
On the other hand,
\begin{equation}  \label{Eq44}           %(4.4)
 \big(\sa^*(\overline\varphi\omega)\big)_x(v) =
       \overline\varphi\big((\sa^*\omega)_x(v)\big) =
                \overline\varphi\big(\omega_{\gra,x}(v)\big).
\end{equation}
Hence, in virtue of the assumption, equalities \eqref{Eq43} and \eqref{Eq44} lead to \eqref{Eq42}.

Conversely, assume that condition \eqref{Eq42} holds. Then, on each $\pi^{-1}(U_{\gra})$, we determine a smooth map $\ga\col \pi^{-1}(U_{\gra}) \ra G$  such that $p=\sa(\pi(p))\cdot \ga(p)$, for every $p\in \pi^{-1}(U_{\gra})$. By routine computations we see that
\begin{gather}                  %(4.5}-(4.6}
 \overline\varphi\circ \Ad\big(\ga(p)^{-1}\big)=\Ad\big((\varphi\circ
   \ga)(p)^{-1}\big)\circ \overline\varphi, \label{Eq45}\\
 \overline\varphi(\ga^{-1}d\ga) = (\varphi\circ \ga)^{-1}d(\varphi
                          \circ \ga).      \label{Eq46}
\end{gather}
Thus, for every $p\in \pi^{-1}(U_{\gra})$ and every $u\in T_pP$, equalities \eqref{Eq45} and \eqref{Eq46} applied to \eqref{Eq35} yield
\begin{align*}
 \overline\varphi(\omega_p(u)) &= \overline\varphi\circ \Ad(\ga(p)^{-1})
    .(\pi^*\oa)_p(u)+\overline\varphi.(\ga^{-1}d\ga)_p(u)\\
  &= \Ad\big(\varphi(\ga(p))^{-1}\big)\circ \overline\varphi
    .(\pi^*\oa)_p(u) + \big((\varphi\circ\ga)^{-1}d(\varphi\circ\ga)\big)_p(u)\\
  &= \Ad\big(\varphi(\ga(p))^{-1}\big)\circ \overline\varphi
    .\omega_{\gra,x}(T_p\pi(u)) + \big((\varphi\circ\ga)^{-1}d(\varphi\circ\ga)\big)_p(u),
\end{align*}
or, in virtue of \eqref{Eq42} and $x=\pi(p)$,
\begin{equation}      \label{Eq47}              %(4.7)
  \begin{aligned}
    \overline\varphi(\omega_p(u))
    &= \Ad\big(\varphi(\ga(p))^{-1}\big)[\Ad(\ha(x)^{-1}).
                                      \theta_{\gra,x}(T_p\pi(u))
    +(\ha^{-1}d\ha)_p(u)]\\
  &\quad + \big((\varphi\circ\ga)^{-1}d(\varphi\circ\ga)\big)_p(u).
  \end{aligned}
\end{equation}

To express $\theta$ by local connection  forms [analogously to \eqref{Eq35}], we define the smooth maps $\ka\col \pi'^{-1}(U_{\gra}) \ra H$ ($\gra\in I$) such that $q = \tea(\pi'(q))\cdot \ka(q)$, for every $q\in \pi'^{-1}(U_{\gra})$. Then, for $q=f(p)$, we obtain
\[
   f(p) = \tea(\pi'(f(p))\cdot \ka(f(p)) =
                                    \tea(x)\cdot \ka(f(p)).
\]
Since, by the definition of $\ga$,
\[
   f(p)=f(\sa(x)\cdot\ga(p)) = f(\sa(x))\cdot\varphi(\ga(p)))=
      \tea(x)\cdot \ha(x)\cdot\varphi(\ga(p)),
\]
it follows that $\ka(f(p))=\ha(x)\cdot\varphi(\ga(p))$, or
\begin{equation}   \label{Eq48}                  %(4.8)
  \ka\circ f|_{\pi^{-1}(U_{\gra})}=
                            (\ha\circ \pi)\cdot(\varphi\circ\ga).
\end{equation}
Therefore,
\begin{equation}  \label{Eq49}                   %(4.9)
 \begin{aligned}
 & (f^*\theta)_p(u) = \theta_{f(p)}(T_pf(u))\\
 &= \Ad\big(\ka(f(p))^{-1}\big).(\pi'^*\ta)_{f(p)}(T_pf(u))+
                                   (\ka^{-1}d\ka)_{f(p)}(T_pf(u))\\
 &= \Ad\big(\ka(f(p))^{-1}\big).(\pi'^*\ta)_{f(p)}(T_pf(u))+
                 \big((\ka\circ f)^{-1}d(\ka\circ f)\big)_p(u).
 \end{aligned}
\end{equation}                                                                                                        Because the Maurer-Cartan differential of the right-hand side of  \eqref{Eq48} is given by
\begin{gather*}
  \big((\ha\circ \pi)\cdot(\varphi\circ\ga)\big)^{-1}
   d\big((\ha\circ \pi)\cdot(\varphi\circ\ga)\big)^{-1} =\\
       = (\varphi\circ\ga)^{-1}d(\varphi\circ\ga) +
                   \Ad\big((\varphi\circ\ga)^{-1}\big).
                        (\ha\circ \pi)^{-1}d(\ha\circ \pi),
\end{gather*}
applying \eqref{Eq48} and the latter differential, we transform \eqref{Eq49} into
\begin{align*}
(f^*\theta)_p(u) &=
 \Ad\big(\varphi(\ga(p))^{-1}\big)\circ
     \Ad\big(\ha(x)^{-1}\big).\theta_{\gra,x}(T_p\pi(u))\\
     & \qquad +
       \big[\big((\ha\circ\pi)\cdot(\varphi\circ\ga)\big)^{-1}
        d\big((\ha\circ\pi)\cdot(\varphi\circ\ga)\big)\big]_p(u)\\
 &= \Ad\big(\varphi(\ga(p))^{-1}\big)\circ
       \Ad(\ha(x)^{-1}).\theta_{\gra,x}(T_p\pi(u))\\
 &\qquad +
      \big((\varphi\circ\ga)^{-1}d((\varphi\circ\ga))\big)_p(u)\\
 & \qquad + \Ad\big(\varphi(\ga(p)^{-1}\big).
          \big((\ha\circ \pi)^{-1}d (\ha\circ \pi)\big)_p(u)\\
 &= \Ad\big(\varphi(\ga(p))^{-1}\big)\big[\Ad(\ha(x)^{-1}.
      \theta_{\gra,x}(T_p\pi(u)) + (\ha^{-1}d\ha)_x(u)\big]\\
 & \qquad+\big((\varphi\circ\ga)^{-1}d((\varphi\circ\ga))\big)_p(u).
\end{align*}
Comparing the preceding with \eqref{Eq47}, we establish \eqref{Eq42} and conclude the proof.
\end{proof}

A variation of Theorem~\ref{T41} assures even the construction of the morphism between the principal bundles under consideration, provided that a Lie group morphism between their structure groups is given beforehand. Indeed, with the assumptions mentioned before the statement of Theorem~\ref{T41} still in force, we obtain:

\begin{Thm}\label{T42}                %theorem 4.2
Let $\omega$ and $\theta$ be connections on $P\equiv \pfb$ and $Q\equiv (Q,H,B,\pi'$, respectively, and let $\varphi\col G\ra H$ be a given morphism of (Banach-) Lie groups. Then $\omega$ and $\theta$ are related by a principal bundle morphism $\ffi$ if and only if \eqref{Eq42} holds along with condition
\begin{equation}   \label{Eq410}               %(4.10)
 \hab = \ha\cdot (\varphi\circ \gab)\cdot \hb^{-1},
\end{equation}
where $\{\gab\}$  and $\{\hab\}$ are the transition functions of $P$ and $Q$, respectively, and $\{\ha\}$ are the smooth maps of Theorem~\ref{T41}.
\end{Thm}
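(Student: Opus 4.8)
The strategy is to reduce the existence of a principal bundle morphism $\ffi$ to the existence of a coherent family of local data, and then to invoke Theorem~\ref{T41} for the connection part. First I would address necessity. If such a morphism $f$ exists, then Theorem~\ref{T41} immediately gives \eqref{Eq42} together with the smooth maps $\{\ha\}$ defined by $f\circ\sa=\tea\cdot\ha$; what remains is to verify the cocycle-type relation \eqref{Eq410}. This should come from comparing $f\circ\sbe$ in two ways: on the one hand $f\circ\sbe=\teb\cdot\hb$ by definition, and on the other hand, using $\sbe=\sa\cdot\gab$ and $\varphi$-equivariance of $f$, we get $f\circ\sbe=(f\circ\sa)\cdot(\varphi\circ\gab)=\tea\cdot\ha\cdot(\varphi\circ\gab)$; finally rewriting $\tea=\teb\cdot\hab^{-1}$ (from $\teb=\tea\cdot\hab$) and using freeness of the $H$-action on $Q$ yields $\hb=\hab^{-1}\cdot\ha\cdot(\varphi\circ\gab)$, which rearranges to \eqref{Eq410}.

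For sufficiency, assume \eqref{Eq42} and \eqref{Eq410} hold for some smooth family $\{\ha\col U_{\gra}\ra H\}$. The plan is to build $f\col P\ra Q$ locally and glue. On $\pi^{-1}(U_{\gra})$, every $p$ is uniquely $p=\sa(\pi(p))\cdot\ga(p)$ with $\ga$ as on page~\pageref{ga}; define
\[
  f_{\gra}(p):=\tea(\pi(p))\cdot\ha(\pi(p))\cdot\varphi(\ga(p)).
\]
One checks $f_{\gra}$ is smooth, fibre-preserving (projects to $\id_B$), and $\varphi$-equivariant, since for $g\in G$ one has $pg=\sa(\pi(p))\cdot(\ga(p)g)$, hence $\ga(pg)=\ga(p)g$, so $f_{\gra}(pg)=f_{\gra}(p)\cdot\varphi(g)$. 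The key point is that the $\{f_{\gra}\}$ agree on overlaps: for $p\in\pi^{-1}(\Uab)$ one has $\ga(p)=\gab(\pi(p))\cdot g_{\grb}(p)$ (from $\sbe=\sa\gab$), and substituting into the formula for $f_{\grb}(p)$ and using \eqref{Eq410} together with $\tea=\teb\cdot\hab^{-1}$ collapses $f_{\grb}(p)$ to $f_{\gra}(p)$. This gives a well-defined $\varphi$-equivariant smooth map $f\col P\ra Q$ over $\id_B$, i.e. a pb-morphism $\ffi$. Moreover $f\circ\sa$ equals $\tea\cdot\ha$ by construction (take $\ga(\sa(x))=e$), so the hypothesis \eqref{Eq42} is exactly the criterion of Theorem~\ref{T41} for this $f$, whence $\omega$ and $\theta$ are $(f,\varphi,\id_B)$-related.

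\textbf{Main obstacle.} The genuinely delicate step is the overlap computation showing $f_{\gra}=f_{\grb}$ on $\pi^{-1}(\Uab)$: it requires tracking how the transition functions $\gab$ and $\hab$ interact with the conjugation by $\varphi\circ\ga$ inside $H$, and this is precisely where \eqref{Eq410} is used in an essential (rather than cosmetic) way. One must be careful that $\hab$ is evaluated at $\pi(p)$ while $\ga(p)$ lives over the same point, and that $\varphi$ is a group homomorphism so that $\varphi(\gab(\pi(p))\cdot g_{\grb}(p))=\varphi(\gab(\pi(p)))\cdot\varphi(g_{\grb}(p))$; the identity \eqref{Eq410} is then exactly the relation needed for the product $\tea\cdot\ha\cdot\varphi(\ga)$ computed in the $\gra$-chart to match the analogous product in the $\grb$-chart. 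Everything else — smoothness, equivariance, the reduction to Theorem~\ref{T41} — is routine once this gluing is in place.
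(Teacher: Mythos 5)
Your proposal is correct and follows essentially the same route as the paper: necessity by comparing $f\circ\sbe$ computed via $\varphi$-equivariance with $\teb\cdot\hb$ and cancelling by freeness, and sufficiency by defining $f$ locally as $\tea(\pi(p))\cdot\ha(\pi(p))\cdot\varphi(\ga(p))$, gluing on overlaps via \eqref{Eq410} and the relation $\ga=\gab\cdot g_{\grb}$, and then invoking the converse direction of Theorem~\ref{T41}. The gluing computation you flag as the delicate step is exactly the one carried out in the paper's proof.
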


\begin{proof}
If there exists a pb-morphism $(f,\varphi,\id_B)$ such that $\omega$ and $\theta$ are $(f,\varphi,\id_B)$-related, then by the previous theorem we obtain \eqref{Eq42}.

Moreover, the definition of $\{\ha\}$ implies that $f(s_i(x))=t_i(x)\cdot h_i(x)$, for every $x\in U_i$ and $i=\gra,\grb$. Since $\sbe(x)=\sa(x)\cdot \gab(x)$ and $\teb(x)=\tea(x)\cdot \hab(x)$, for all $x\in \Uab$, it follows that
\[
  \teb(x)\cdot \hb(x) = f(\sbe(x)) = f(\sa(x)\cdot\gab(x)) =
                                     f(\sa(x))\cdot\varphi(\gab(x)),
\]
or, equivalently,
\[
  \tea(x)\cdot\hab(x)\cdot\hb(x) =
                      \tea(x)\cdot\ha(x)\cdot\varphi(\gab(x)),
\]
from which \eqref{Eq410} follows.

Conversely, assume that conditions \eqref{Eq410} and \eqref{Eq42} hold. The first suffices to define the morphism  $(f,\varphi,\id_B)$, thus \eqref{Eq42} will imply the relatedness of the connections by the inverse part of Theorem \ref{T41}. As a matter of fact, we define the map $f\col P\ra Q$ by setting
\begin{equation}   \label{Eq411}               %(4.11)
   f(p) := \tea(\pi(p))\cdot\ha(\pi(p))\cdot\varphi(\ga(p)),
\end{equation}
for every $p\in\pi^{-1}(U_{\gra})$, where $\ga\col \pi^{-1}(U_{\gra})\ra G$ is the smooth map satisfying equality $p=\sa(\pi(p))\cdot \ga(p)$, for every $p\in \pi^{-1}(U_{\gra})$ (see the proof of Theorem~\ref{T41}).

We claim that $f$ is well defined: For every $p\in \pi^{-1}(U_{\grb})$, we have the analogous expression
\begin{equation}   \label{Eq412}               %(4.12)
   f(p) = \teb(\pi(p)\cdot\hb(\pi(p))\cdot\varphi(\gb(p)),
\end{equation}
thus we need to show that \eqref{Eq411} and \eqref{Eq412} coincide on overlappings. Indeed, for every $p\in\pi^{-1}(\Uab)$,
\[
   \sbe(\pi(p))\cdot \gb(\pi(p)) = p = \sa(\pi(p))\cdot\ga(p),
\]
or
\[
  \sa(\pi(p))\cdot\gab(\pi(p)) \cdot \gb(\pi(p)) =
                                      \sa(\pi(p))\cdot\ga(p);
\]
hence,
\[
      \gb(p)=g_{\grb\gra}\cdot \ga(p),
\]
and, by applying $\varphi$,
\begin{equation}   \label{Eq413}                 %(4.13)
  \varphi(\gb(p))=\varphi(g_{\grb\gra})\cdot \varphi(\ga(p)).
\end{equation}
Therefore, in virtue of \eqref{Eq410}, \eqref{Eq413}, the analog of the right-hand side of \eqref{Eq411} for $\tea$ transforms into
\begin{gather*}
 \teb(\pi(p))\cdot\hb(\pi(p))\cdot\varphi(\gb(p))=\\
 \tea(\pi(p))\cdot\hab(\pi(p))\cdot\hb(\pi(p))\cdot
                                     \varphi(\gb(p))=\\
 \tea(\pi(p))\cdot\ha(\pi(p))\cdot\varphi(\gab(\pi(p)))
                                 \cdot\varphi(\gb(p))=\\
 \tea(\pi(p))\cdot\ha(\pi(p))\cdot\varphi(\ga(p)),
\end{gather*}
which proves the claim. The smoothness of $f$ is clear from the definition of $f|_{\pi^{-1}(U_{\gra})}$, for every $\gra\in I$.

Besides, for every $p\in \pi^{-1}(U_{\gra})$ and $\gra\in I$,
\[
  (\pi'\circ f)(p) = \pi'\big(\tea(\pi(p))\cdot\ha(\pi(p))
   \cdot\varphi(\ga(p))\big)= \pi'(\tea(\pi(p))= \pi(p),
\]
i.e. $\pi'\circ f=\pi$. Finally, we show that $f(pg)=f(p)\varphi(g)$, for every $p\in P$ and $g\in G$: If, for instance, $p\in \pi^{-1}(U_{\gra})$, then, analogously to \eqref{Eq411}, we may write
\begin{align*}
 f(pg) &= \tea(\pi(pg)) \cdot\ha(\pi(pg))
                                \cdot\varphi(\xa(pg))\\
 &= \tea(\pi(p)) \cdot\ha(\pi(p))\cdot\varphi(\xa(pg)),
\end{align*}
where $\xa\col \pi^{-1}(U_{\gra})\ra G$ is the smooth map defined by
\[
  pg = \sa(\pi(pg)) \cdot \xa(pg) =
                           \sa(\pi(p)) \cdot \xa(pg);
\]
But $p=\sa(\pi(p))\cdot\ga(p)$ implies that
\[
   \sa(\pi(p))\cdot\ga(p)\cdot g =
                             \sa(\pi(p)) \cdot \xa(pg),
\]
from which follows that $\xa(pg)=\ga(p) g$. The latter leads to
\[
  f(pg)=
     \tea(\pi(p)) \cdot\ha(\pi(p))\cdot\varphi(\xa(pg))=
                                                       f(p)\cdot\varphi(g),
\]
as desired. We thus conclude that $\ffi$ is a pb-morphism. Obviously, the very definition of $f$ implies that $f(\sa(x))=\tea(x)\cdot\ha(x)$, for every $x\in U_{\gra}$. The proof is now complete, as we explained in the beginning of the inverse part of the theorem.
\end{proof}

\begin{Rem}  \label{R43}                 %remark 4.3
The previous theorems can be easily extended  to principal bundles over diffeomorphic bases $B$ and $B'$, respectively. In this case we may consider morphisms of the form $(f,\varphi,f_o)$, where $f_o\col B\ra B'$ is a diffeomorphism. This would lead to undue complications in the computations (although the proofs follow the same patterns as before), without a particular gain in the final outcome.

However, for the sake of completeness, we briefly indicate the main steps of this approach: First of all we should rearrange the trivializations of the bundles so that we can consider the trivializing covers $\{(U_{\gra},\Phi_{\gra})\}_{\gra\in I}$ and  $\{(f_o(U_{\gra}),\Psi_{\gra})\}_{\gra\in I}$, of $P$ and $Q$, respectively. Then \eqref{Eq42}, \eqref{Eq410} and \eqref{Eq411}               take the respective forms
\begin{equation}
 \overline\varphi.\omega_{\gra,x}(v) = \Ad\big(h^{-1}_{\gra}(x)\big).
      \theta_{\gra,f_o(x)}(T_xf_o(v)) +
   \left(h^{-1}_{\gra}dh_{\gra}\right)_x(v),  \tag{4.2$'$}
\end{equation}
for every $x\in \Uab, v\in T_xB$, and every $\gra,\grb\in I$;
\begin{equation}
 \hab(f_o(x)) = \ha(x)\cdot \varphi(\gab(x))\cdot \hb(x)^{-1},
                                                   \tag{4.10$'$}
\end{equation}
for every $x\in \Uab$ and $\gra,\grb\in I$; and
\begin{equation}
   f(p)=\tea(f_o(\pi(p)))\cdot\ha(\pi(p))\cdot\varphi(\ga(p));
    \qquad p\in\pi^{-1}(U_{\gra}). \tag{4.11$'$}
\end{equation}
\end{Rem}

\medskip From the proof of Theorem~\ref{T42},  we single out the next useful result (with the notations of the same theorem).

\begin{Cor}   \label{C44}                %corollary 4.4
Every morphism $\varphi \col G\ra H$ of (Banach-) Lie groups is completed to a morphism $\ffi$ of $\pfb$ into $(Q,H,B,\pi')$ if and only if there is a family of smooth maps $\{\ha\col U_{\gra}\ra H\,|\,\gra\in I\}$ such that
\[
      \hab = \ha\cdot (\varphi \circ \gab)\cdot \hb^{-1},
\]
on $\Uab$, for every $\gra,\grb\in I$.
\end{Cor}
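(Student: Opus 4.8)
The plan is to deduce Corollary~\ref{C44} directly from Theorem~\ref{T42} by stripping away the connection data, since the transition-function identity \eqref{Eq410} is exactly the part of that theorem that produces the bundle morphism $f$ (and, conversely, is forced by it), with the connection identity \eqref{Eq42} playing no role in that step. Concretely, I would argue as follows.

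\begin{proof}
Assume first that $\varphi$ is completed to a pb-morphism $\ffi$ of $\pfb$ into $(Q,H,B,\pi')$. For each $\gra\in I$, the composite $f\circ\sa\col U_{\gra}\ra Q$ is a smooth section of $Q$ over $U_{\gra}$ (indeed $\pi'\circ f\circ\sa=\pi\circ\sa=\id_{U_{\gra}}$), hence there is a unique smooth map $\ha\col U_{\gra}\ra H$ with $f\circ\sa=\tea\cdot\ha$. Exactly as in the proof of Theorem~\ref{T42}, over $\Uab$ we compute, using $\sbe=\sa\cdot\gab$, $\teb=\tea\cdot\hab$, and the $\varphi$-equivariance of $f$,
\[
  \tea\cdot\hab\cdot\hb = \teb\cdot\hb = f\circ\sbe
     = f(\sa\cdot\gab) = (f\circ\sa)\cdot\varphi(\gab)
     = \tea\cdot\ha\cdot(\varphi\circ\gab),
\]
and cancelling the free element $\tea(x)$ yields $\hab=\ha\cdot(\varphi\circ\gab)\cdot\hb^{-1}$ on $\Uab$, which is the stated cocycle-type relation.

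Conversely, suppose a family $\{\ha\col U_{\gra}\ra H\mid\gra\in I\}$ with $\hab=\ha\cdot(\varphi\circ\gab)\cdot\hb^{-1}$ on $\Uab$ is given. I define $f\col P\ra Q$ locally by
\[
   f(p) := \tea(\pi(p))\cdot\ha(\pi(p))\cdot\varphi(\ga(p)),
       \qquad p\in\pi^{-1}(U_{\gra}),
\]
where $\ga\col\pi^{-1}(U_{\gra})\ra G$ is the smooth map with $p=\sa(\pi(p))\cdot\ga(p)$. The verification that this is independent of the chart $U_{\gra}$ (so that $f$ is globally well defined and smooth), that $\pi'\circ f=\pi$, and that $f(pg)=f(p)\varphi(g)$, is \emph{verbatim} the corresponding portion of the proof of Theorem~\ref{T42}: the change-of-chart identity $\gb(p)=g_{\grb\gra}\cdot\ga(p)$ (hence $\varphi(\gb(p))=\varphi(g_{\grb\gra})\cdot\varphi(\ga(p))$) combined with the hypothesis $\hab=\ha\cdot(\varphi\circ\gab)\cdot\hb^{-1}$ gives the coincidence of the two local expressions, and the identities $\xa(pg)=\ga(p)g$ and $\pi\circ\sa=\id$ give the remaining two properties. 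Thus $\ffi$ is a pb-morphism, and by construction $f(\sa(x))=\tea(x)\cdot\ha(x)$ for every $x\in U_{\gra}$, so $\varphi$ has been completed to the required morphism.
\end{proof}

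There is essentially no obstacle here: the content has already been extracted inside the proof of Theorem~\ref{T42}, and the only care needed is to observe that the well-definedness, the compatibility with projections, and the $\varphi$-equivariance of $f$ never invoked the connection forms $\omega,\theta$ or condition \eqref{Eq42} — they used only \eqref{Eq410}. Hence the honest thing to do is to state the corollary as a direct excerpt of that argument rather than re-deriving it; the brief obligation is simply to point explicitly at which lines of the earlier proof are being reused and to note that the converse direction of the cocycle relation is the short cancellation computation displayed above.
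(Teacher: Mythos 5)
Your proof is correct and follows exactly the paper's intended route: the paper gives no separate argument for Corollary~\ref{C44} but explicitly ``singles it out'' from the proof of Theorem~\ref{T42}, and your extraction of the two relevant portions (the cancellation computation giving the cocycle relation \eqref{Eq410}, and the chart-independent construction of $f$ via \eqref{Eq411}) is precisely what is meant. Your observation that neither direction invokes the connection condition \eqref{Eq42} is the whole point of the corollary, so nothing is missing.
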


Theorem~\ref{T42} implies the following result, already proved in \cite{Vas1}:

\begin{Cor}   \label{C45}                %corollary 4.5
Two connections $\omega$ and $\theta$ on $(P,G,B,B)$ and $(Q,G,B,\pi')$, respectively, are $(f,\id_G,\id_B)$-related if and only if there is a family of smooth maps $\{\ha\col U_{\gra}\ra G\,|\,\gra\in I\}\}_{\gra\in I}$, such that
equalities
\begin{gather*}
   \hab = \ha\cdot \gab\cdot \hb^{-1},\\
  \omega_{\gra} = \Ad(h^{-1}_{\gra})\theta_{\gra}
            + h^{-1}_{\gra}dh_{\gra}
\end{gather*}
hold for all $\gra,\grb\in I$.
\end{Cor}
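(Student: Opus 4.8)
The plan is to derive Corollary~\ref{C45} as the special case of Theorem~\ref{T42} obtained by taking $H = G$ and $\varphi = \id_G$. First I would observe that when $\varphi = \id_G$, the induced Lie algebra morphism $\overline{\varphi} = \overline{\id_G}$ is the identity $\id_{\mathfrak g}$, so condition \eqref{Eq42} of Theorem~\ref{T42} reads $\omega_{\gra} = \Ad(h_{\gra}^{-1}).\theta_{\gra} + h_{\gra}^{-1}dh_{\gra}$ for all $\gra \in I$, which is precisely the second displayed equality in the statement of Corollary~\ref{C45}. Similarly, condition \eqref{Eq410} becomes $\hab = \ha\cdot(\id_G\circ\gab)\cdot\hb^{-1} = \ha\cdot\gab\cdot\hb^{-1}$, matching the first displayed equality.

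Next I would invoke Theorem~\ref{T42} directly: with the structure-group morphism fixed in advance as $\id_G\col G\ra G$, that theorem asserts that $\omega$ and $\theta$ are related by a principal bundle morphism $(f,\id_G,\id_B)$ if and only if both \eqref{Eq42} and \eqref{Eq410} hold, where $\{\ha\col U_{\gra}\ra G\}$ are smooth maps with $f\circ\sa = \tea\cdot\ha$. Translating these two conditions through the substitution $\overline{\varphi} = \id_{\mathfrak g}$, $\varphi = \id_G$ as above yields exactly the two equalities in Corollary~\ref{C45}. Conversely, given a family $\{\ha\}$ satisfying those two equalities, Theorem~\ref{T42} produces the desired pb-morphism $(f,\id_G,\id_B)$ — indeed formula \eqref{Eq411} specializes to $f(p) = \tea(\pi(p))\cdot\ha(\pi(p))\cdot\ga(p)$ on each $\pi^{-1}(U_{\gra})$.

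There is essentially no obstacle here; the only point requiring a word of care is the identification $\overline{\id_G} = \id_{\mathfrak g}$, i.e. that the Lie algebra morphism induced by the identity group homomorphism is the identity, which is immediate since $\overline{\varphi} = T_e\varphi$ and $T_e\id_G = \id_{T_eG} = \id_{\mathfrak g}$. One should also note that the families $\{\ha\}$ in the two directions of the equivalence are the same, so the corollary is a clean instance of the theorem and no independent verification of compatibility is needed. The statement's parenthetical attribution to \cite{Vas1} simply records that this particular case had been obtained earlier by other means; our derivation shows it is subsumed in the local-connection-form criterion of Theorem~\ref{T42}.

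\begin{proof}
Apply Theorem~\ref{T42} with $H = G$ and $\varphi = \id_G\col G\ra G$. Since $\overline{\varphi} = T_e\varphi = T_e\id_G = \id_{\mathfrak g}$, condition \eqref{Eq42} becomes
\[
   \omega_{\gra} = \Ad(h_{\gra}^{-1}).\theta_{\gra} + h_{\gra}^{-1}dh_{\gra},
   \qquad \gra\in I,
\]
while condition \eqref{Eq410} becomes
\[
   \hab = \ha\cdot(\varphi\circ\gab)\cdot\hb^{-1} = \ha\cdot\gab\cdot\hb^{-1},
   \qquad \gra,\grb\in I,
\]
where $\{\ha\col U_{\gra}\ra G\,|\,\gra\in I\}$ are the smooth maps determined by $f\circ\sa = \tea\cdot\ha$. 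Thus Theorem~\ref{T42} states precisely that $\omega$ and $\theta$ are related by a pb-morphism of the form $(f,\id_G,\id_B)$ if and only if the two displayed equalities hold for all $\gra,\grb\in I$, which is the assertion of the corollary.
\end{proof}
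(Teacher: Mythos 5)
Your proposal is correct and coincides with the paper's own (implicit) argument: the paper simply states that Corollary~\ref{C45} follows from Theorem~\ref{T42}, and your specialization $H=G$, $\varphi=\id_G$, $\overline{\varphi}=\id_{\mathfrak g}$ is exactly the intended derivation, with conditions \eqref{Eq42} and \eqref{Eq410} reducing to the two displayed equalities.
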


Applying once again Theorem~\ref{T41}, we recover \cite[Proposition 6.2]{Kob-Nom}, whose proof in the infinite-dimensional context is substantially more than a direct translation of its counterpart given in the aforementioned citation.

Before proceeding to details, we recall that the \emph{right Maurer-Cartan} or \emph{logarithmic differential} of a smooth map $f\col U\subseteq B\ra G$ is the form $df.f^{-1} \in \grL(U,\mathfrak{g})$, given by
\[
  \left(df.f^{-1}\right)_x(v):= \left(T_{f(x)}\rho_{f(x)^{-1}}
   \circ T_xf\right)(v); \qquad  x\in U,\, v\in T_xB,
\]
where $\rho_g\col G\ra G$ is the right translation of $G$ by $g\in G$ (the dot inserted in the notation of this differential is necessary in order to avoid any possible confusion here).

It can be verified that
\begin{gather*}
     df^{-1}.f = \Ad(f^{-1}).(df.f^{-1}),\\
      f^{-1}df = \Ad(f^{-1}).(df.f^{-1}).
\end{gather*}

\begin{Thm} \label{T46}                   %theorem 4.6
If $\ffi$ is a morphism of $P\equiv \pfb$ into $Q\equiv (Q,H,B,\pi')$, then every  connection $\omega$ on $P$ induces a unique connection $\theta$ on $Q$ which is $\ffi$-related with $\omega$.
\end{Thm}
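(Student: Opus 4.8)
The statement asserts that, given a pb-morphism $\ffi$ and a connection $\omega$ on $P$, there is a unique connection $\theta$ on $Q$ that is $\ffi$-related with $\omega$. The natural strategy is to work locally, using Theorem~\ref{T41} as the criterion, and then to invoke the converse direction of that theorem (together with the standard reconstruction \eqref{Eq35}) to obtain a genuine connection form on $Q$. First I would fix a trivializing cover $\{U_{\gra}\}_{\gra\in I}$ of $B$ over which both $P$ and $Q$ trivialize, with natural sections $\{\sa\}$ and $\{\tea\}$. Since $f$ is a pb-morphism over $\id_B$, the sections $f\circ\sa$ are sections of $Q$ over $U_{\gra}$, so there exist unique smooth maps $\ha\col U_{\gra}\ra H$ with $f\circ\sa = \tea\cdot\ha$. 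These are exactly the maps appearing in Theorem~\ref{T41}.

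The definition of $\theta$ is then forced: its local connection forms must be
\[
   \ta := \Ad(\ha).(\overline\varphi\,\oa) - \Ad(\ha).(\ha^{-1}d\ha) = \Ad(\ha).(\overline\varphi\,\oa) - d\ha.\ha^{-1},
\]
i.e. the forms obtained by solving \eqref{Eq42} for $\ta$. The key step is to verify that this family $\{\ta\}$ satisfies the compatibility condition \eqref{loccompcond} on overlaps $\Uab$, so that by the converse construction recalled in Section~\ref{S3} (formula \eqref{Eq35}) it determines a unique connection form $\theta\in\grL^1(Q,\mathfrak{h})$ with local forms precisely $\{\ta\}$. For this I would use three ingredients: the cocycle identity $f\circ\sbe = f\circ(\sa\cdot\gab) = (\tea\cdot\ha)\cdot\varphi(\gab)$ together with $f\circ\sbe = \teb\cdot\hb$, which yields the transition relation $\hab = \ha\cdot(\varphi\circ\gab)\cdot\hb^{-1}$ (Corollary~\ref{C44}); the compatibility \eqref{loccompcond} for the given $\{\oa\}$; and the two algebraic identities \eqref{Eq45}--\eqref{Eq46} relating $\overline\varphi$ with $\Ad$ and with the logarithmic differential. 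Substituting and using the Leibniz-type formula for the Maurer-Cartan differential of a product (as already computed in the proof of Theorem~\ref{T42}) gives \eqref{loccompcond} for $\{\ta\}$ after a direct if slightly lengthy manipulation.

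Having produced $\theta$, the relatedness $f^*\theta = \overline\varphi\,\omega$ follows immediately: by construction the local forms $\{\oa\}$ and $\{\ta\}$ satisfy \eqref{Eq42} with respect to the maps $\{\ha\}$, and $\{\ha\}$ satisfies $f\circ\sa = \tea\cdot\ha$, so the converse (``if'') part of Theorem~\ref{T41} applies verbatim and gives that $\omega$ and $\theta$ are $\ffi$-related. Uniqueness is then clear: if $\theta'$ is another connection on $Q$ that is $\ffi$-related with $\omega$, then the forward direction of Theorem~\ref{T41} forces its local forms to satisfy \eqref{Eq42} with the same $\{\ha\}$ (these maps depend only on $f$, $\{\sa\}$, $\{\tea\}$, not on the connection), hence they coincide with $\{\ta\}$ on every $U_{\gra}$; since a connection form is determined by its local connection forms, $\theta' = \theta$.

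\textbf{Main obstacle.} The only nontrivial point is the verification that $\{\ta\}$ satisfies \eqref{loccompcond}; this is where the interplay of the cocycle relation for $\{\hab\}$, the Ad-naturality of $\overline\varphi$, and the product rule for logarithmic differentials must be assembled carefully. Everything else is formal bookkeeping, and in particular no structural difficulty from the total spaces $P$, $Q$ intervenes — which is precisely the advantage of the local-form viewpoint emphasized throughout the note.
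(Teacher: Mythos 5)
Your proposal is correct, and the existence half follows the paper's own route essentially verbatim: you define $\ta := \Ad(\ha).\overline\varphi\oa - d\ha.\ha^{-1}$ (the paper's \eqref{Eq414}), verify the compatibility condition \eqref{thetacomp} on $\Uab$ from the cocycle relation $\hab=\ha\cdot(\varphi\circ\gab)\cdot\hb^{-1}$, the compatibility \eqref{loccompcond} of $\{\oa\}$, the identities \eqref{Eq45}--\eqref{Eq46}, and the product rule \eqref{Eq418} for logarithmic differentials, and then invoke the converse part of Theorem~\ref{T41} -- exactly the ingredients the paper assembles. Where you genuinely diverge is in the uniqueness argument. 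The paper switches to the global picture: it takes a second related connection with splitting $C''$, uses the equivalent condition $C''\circ(f\times \id)=Tf\circ C=C'\circ(f\times \id)$ from \eqref{Eq41}, and then propagates the equality $C''(f(p),v)=C'(f(p),v)$ to all of $Q$ via the $H$-equivariance of the splittings and the fact that every $q\in Q$ can be written $f(p)\cdot h$. You instead stay entirely local: the forward direction of Theorem~\ref{T41} forces the local forms of any related connection to satisfy \eqref{Eq42} with the maps $\ha$ (which depend only on $f$, $\sa$, $\tea$), and \eqref{Eq42} can be solved uniquely for $\theta_{\gra}$; since \eqref{Eq35} reconstructs a connection form from its local forms, the connection is determined. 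Both arguments are complete; yours is shorter and more faithful to the local-form philosophy the note advocates, while the paper's has the minor virtue of not relying on the reconstruction formula and of exhibiting explicitly the role of the $H$-action on $Q$ (i.e.\ that $f(P)$ meets every fibre and every orbit, which is what makes a connection on $Q$ overdetermined by its pullback along $f$). The only part you leave as a sketch is the verification of \eqref{thetacomp}, but you name all the identities needed, and that computation is indeed routine once they are in hand.
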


\begin{proof} As in the previous cases, we may consider trivializations over the same open cover $\{U_{\gra}\}_{\gra\in I}$ of $B$. The connection (form) $\omega$ is completely known from its local connection forms $\{\oa\}_{\in I}$, according to \eqref{Eq35}. If $\{\ha\}_{\gra\in I}$ are the smooth maps of Theorem~\ref{T41}, we define the local forms
\begin{equation}  \label{Eq414}                  %(4.14)
  \ta := \Ad(\ha).\overline\varphi\oa -
     d\ha.\ha^{-1}\in \Lambda^1(U_{\gra},\mathfrak{h}),
                                           \qquad \gra\in I.
\end{equation}
We will prove the analog of \eqref{loccompcond}, namely
\begin{equation}      \label{thetacomp}                   %(4.15)
  \tb = \Ad({\hab}^{-1}).\ta +
                      {\hab}^{-1}d\hab; \qquad \gra,\grb\in I,
\end{equation}
ensuring the existence of a connection form $\theta$ on $Q$. To this end we elaborate the summands in the right-hand side of
\begin{equation}  \label{Eq416}                  %(4.16)
  \tb := \Ad(\hb).\overline\varphi\ob - d\hb.\hb^{-1}.                                      \end{equation}
First observe that, in virtue of \eqref{loccompcond}, \eqref{Eq45} and \eqref{Eq46}, equality
\begin{align*}
 \overline\varphi\ob&= \overline\varphi\left(\Ad(\gab^{-1}).\oa +
                                           \gab^{-1}d\gab\right)\\
   &= \Ad\big((\varphi\circ \gab)^{-1}\big). \overline\varphi\oa +
              (\varphi\circ \gab)^{-1}d(\varphi \circ \gab)
\end{align*}
holds on $\Uab$, thus
\begin{equation}   \label{Eq417}                    %(4.17)
   \begin{split}
 \Ad(\hb).\overline\varphi\ob &= \Ad\big(\hb\cdot(\varphi\circ
                              \gab)^{-1}\big).\overline\varphi\oa\\
   &\quad + \Ad(\hb).\big((\varphi\circ \gab)^{-1}d(\varphi \circ \gab)\big).   \end{split}
\end{equation}
But, for arbitrary smooth maps $f,g\col \Uab\ra H$, routine computations yield
\begin{equation}   \label{Eq418}                    %(4.18)
  \big(f\cdot g^{-1}\big)^{-1}d \big(f\cdot g^{-1}\big) =
                                    -dg.g^{-1} + \Ad(g).(f^{-1}df).
\end{equation}
Hence, applying \eqref{Eq418} for $g=\hb$, $f=\varphi\circ \gab$, and substituting the second summand in the right-hand side of \eqref{Eq417},  we transform the latter into
\begin{equation*}
  \begin{split}
  \Ad(\hb).\overline\varphi\ob &= \Ad\big(\hb\cdot(\varphi\circ
       \gab)^{-1}\big).\overline\varphi\oa + d\hb.\hb^{-1}\\
       &\quad + \big((\varphi\circ \gab)\cdot\hb^{-1}\big)^{-1}
                       d\big((\varphi\circ \gab)\cdot\hb^{-1}\big),
  \end{split}
\end{equation*}
by which \eqref{Eq416} becomes
\[
   \tb = \Ad\big(\hb\cdot(\varphi\circ
                              \gab)^{-1}\big).\overline\varphi\oa
       +  \big((\varphi\circ \gab)\cdot\hb^{-1}\big)^{-1}
                       d\big((\varphi\circ \gab)\cdot\hb^{-1}\big).
\]
Taking into account \eqref{Eq414} and the interplay between left and right differentials mentioned before the statement, the preceding equality takes the form
\begin{equation} \label{Eq419}                %(4.19)
  \begin{aligned}
  \tb &= \underbrace{\Ad\big(\hb\cdot(\varphi\circ
                       \gab)^{-1}\cdot \ha^{-1}\big)\ta}_{X}\\
      &+ \underbrace{\Ad\big(\hb\cdot(\varphi\circ
                           \gab)^{-1}\big).(\ha^{-1}d\ha)}_{Y}\\
      &+ \underbrace{\big((\varphi\circ \gab)\cdot\hb^{-1}\big)^{-1}
                    d\big((\varphi\circ \gab)\cdot\hb^{-1}\big)}_{Z}.
  \end{aligned}
\end{equation}
Using equality \eqref{Eq410}, we find that
\[
   X = \Ad(\hab^{-1}).\ta,\quad
   Y = \Ad(\hab^{-1}\cdot\ha).\ta(\ha^{-1}d\ha),
\]
and, by the analog of \eqref{Eq418},
\[
   Z = \big(\ha^{-1}\cdot\hab\big)^{-1}d(\ha^{-1}\cdot\hab)
     = \hab^{-1}d\hab - \Ad(\hab^{-1}\cdot\ha).(\ha^{-1}d\ha).
\]
Replacing the expressions of $X$, $Y$ and $Z$ in \eqref{Eq419}, we finally obtain the desired condition \eqref{thetacomp}. On the other hand, \eqref{Eq414} leads to
\[
  \overline\varphi\oa = \Ad(\ha^{-1}).\ta +
                           \ha^{-1} d\ha; \qquad \gra\in I,
\]
from which, in virtue of Theorem~\ref{T41}, follows that $\omega$ and $\theta$ are $(f,\varphi,\id_B)$-related.

It remains to show the uniqueness of $\theta$. The easiest way to check it is to use the corresponding splittings $C$ and $C'$ of the connections. More explicitly, since $\omega$ and $\theta$ are $(f,\varphi,\id_B)$-related,  by the equivalent conditions \eqref{Eq41} we have that
\[
C'\circ (f\times \id_B) =  Tf \circ C.
\]
Analogously, if there is another connection $\omega'' \equiv C''$, also $(f,\varphi,\id_B)$-related with $\omega$, then
\[
     C''\circ (f\times \id_B) =  Tf \circ C,
\]
thus $C''\circ (f\times \id_B)= C'\circ (f\times \id_B)$; that is,
\begin{equation}  \label{Eq415}                 %(4.15)
   C''(f(p),v)=C'(f(p),v);\qquad (p,v)\in P\times_B TB.
\end{equation}
Take now any $(q,u)\in Q\times_B TB$ and set $x:=\pi'(q) =\tau_B(u)$. Choosing an arbitrary $p\in \pi^{-1}(x)$, we can find an $h\in H$ such that $q=f(p)\cdot h$. As a result, in virtue of the action of $H$ on $(\pi')^*(TB)$, the $H$-equivariance of the splittings $C'$, $C''$, and \eqref{Eq415}, we see that
\begin{align*}
 C''(q,u)&= C''(f(p)\cdot h,u) = C''(f(p),u)\cdot h\\
   &:= T_{f(p)\cdot h}R'_{h}\big(C''(f(p),u)\big) =
       T_{f(p)\cdot h}R'_{h}\big(C'(f(p),u)\big)\\
   &=  C'(f(p)\cdot h,u) = C'(q,u),
\end{align*}
which concludes the proof.
\end{proof}

\section{Applications}  \label{S5}         %section 5

\subsection{\rm \underline{Connections on associated principal
bundles}}                        \label{Ss51} \  %\subsection{5.1}

\medskip\noindent We fix a principal bundle $P\equiv(P,G,B,\pi)$ endowed with a connection $\omega$ on it. A morphism $\varphi \col G\ra H$ of (Banach-) Lie groups determines the associated principal bundle $(P\times^G H, H,B,\pi_H)$  and the canonical morphism
\begin{gather*}
(\kappa,\varphi,\id_B)\col (P,G,B,\pi) \lra (P\times^G H, H,B,\pi_H)\col\\
                   p\longmapsto \kappa(p)=[(p,e)],
\end{gather*}
with $e\in H$ denoting the unit (neutral) element (see \cite[\S\S\,6.5--6.6, 7.10]{Bourbaki1}). Considering trivializations over a common open cover $\cU=\{U_{\gra}\}_{\gra\in I}$ of $B$, we check that the natural sections
$\{s_{\gra}^{\varphi}\}$ of the associated bundle are given by
\[
   s_{\gra}^{\varphi}(x) = (\kappa\circ s_{\gra})(x); \qquad x\in U_{\gra},
\]
where $\{s_{\gra}\}$ are the natural sections of  $P$. If $\{\oa\}$ are  the local connection forms of $\omega$, then

\medskip\hfill \begin{minipage}{15cm}
\emph{the 1-forms $\omega^{\varphi}_{\gra}:=\overline\varphi\oa$ ($\gra\in I$) are the local connection forms of a uniquely determined connection $\omega^{\varphi}$ on $P\times^G H$, which is $(\kappa,\phi,\id_B)$-related with $\omega$.}
\end{minipage}

\medskip\noindent This is indeed the case because the maps $\{\ha\}$ of Theorem~\ref{T41} are now constantly equal to the unit $e\in H$. The preceding result is verified also by Theorem~\ref{T46}.

\smallskip Specializing  to the particular case of a representation of $G$ into a Banach space $\E$, in other words, to a Lie group morphism $\varphi \col G\ra \mathrm{GL}(\E)$, we obtain the corresponding associated principal bundle
\[
  \varphi(P)=\left(P_{\varphi}:=P\times^G \mathrm{GL}(\E),
                              \mathrm{GL}(\E),  B,\pi_{\varphi}\right)
\]
with the connection $\omega^{\varphi}$, as in the previous general case. Note that in this case $\kappa(p)=[(p,\id_{\E})]$.

\smallskip \noindent Besides, a representation $\varphi \col G\ra \mathrm{GL}(\E)$ determines also the associated vector bundle $(E_{\varphi},B,\pi_E)$, where $E_{\varphi}=P\times^G\E$, inducing in turn its principal bundle of frames $(P(E_{\varphi}),\mathrm{GL}(\E),B,\bar\pi)$. The bundles $P$ and $P(E_{\varphi})$ are connected by the canonical morphism $(F,\varphi,\id_B)$ with
\[
   F\col P \lra P(E_{\varphi}) \col p \mapsto (x,\wt p),
\]
where $x:=\pi(p)$ and $\wt p \col \E\ra (E_{\varphi})_x=\bar\pi^{-1}(x)$ is the Banach space isomorphism given by $\wt p(u):=[(p,u)]$, $u\in \E$.
The local structure of $P(E_{\varphi})$ determines the corresponding natural local sections $\{\grs_{\gra}\}$, such that
\[
   \grs_{\gra}(x) = F(\sa(x));  \qquad x\in U_{\gra}, \gra\in I.
\]
Consequently, we check that

\medskip\hfill \begin{minipage}{15cm}
\emph{the 1-forms $\bar\omega_{\gra}:=\overline\varphi\oa$ ($\gra\in I$) are the local connection form of a uniquely determined connection $\bar\omega$ on $P(E_{\varphi})$, which is $(F,\phi,\id_B)$-related with $\omega$. Again the maps $\ha$ of Theorem~\ref{T41} are constantly equal to the unit $e\in H$.}
\end{minipage}

\medskip\noindent  This result is also in accordance with Theorem~\ref{T46}.

\smallskip By a sort of universal property of associated bundles, confirmed also by the local structures of $P_{\varphi}$ and $P(E_{\varphi})$ producing the same cocycles, the latter two bundles are  $\mathrm{GL}(\E)$-$B$-isomorphic. As a matter of fact, the map
\[
   \chi([(p,h)]) := F(p)\cdot h; \qquad [(p,h)\in P\times^G H,
\]
induces the principal bundle isomorphism $(\chi,\id_{\mathrm{GL}(\E)},\id_B)$ making the following diagram commutative:
\begin{diagram}
    P &   \rTo^{\kappa\quad}  &
                        P_{\varphi}=P\times^G \mathrm{GL}(\E)\\
      & \rdTo_{\rotatebox{40}{$F$}} &
                      \cong\dDashto_{\raisebox{.5ex}{$\chi$}}\\
      &                              & P(E_{\varphi})
\end{diagram}

\smallskip\centerline{{\sc Diagram 3}}

\medskip\noindent Since $\oa^{\varphi}=\bar\omega_{\gra}$ and $\ha=e$, for every $\gra\in I$, we conclude that

\medskip\hfill \begin{minipage}{15cm}
\emph{the connections $\omega^{\varphi}$ and $\bar\omega$ are $(\chi,\id_{\mathrm{GL}(\E)},\id_B)$-related.}
\end{minipage}

\subsection{\rm \underline{Connections on the frame bundle}}
                              \label{Ss52}  \   %\subsection 5.2

\medskip\noindent Let $E \equiv (E,B,\pi_E)$ be a vector bundle of fibre type $\E$ and the corresponding principal bundle of frames $P(E)\equiv (P(E),\mathrm{GL}(\E),B, \pi_P)$. By appropriate restrictions, we may consider a smooth atlas $\{(U_{\gra},\phi_{\gra})\}_{\gra\in I}$ of $B$, over which we define the local trivializations of both $E$ and $P(E)$.

Assume first that $E$ admits a linear connection $K$ with Christoffel symbols the smooth maps (see \cite{Flaschel})
\[
 \Grc_{\gra}\col \phi_{\gra}(U_{\gra})\lra \cL_2(\B,\E;\E),
                                                   \qquad \gra\in I.
\]
For every $x\in U_{\gra}$ and every $v\in T_xB$, we set
\begin{equation}   \label{5.1}                      %(5.1)
  \omega_{\gra,x}(v) := \Grc_{\gra}(\phi_{\gra}(x))\big(
                     T_x\phi_{\gra}(v), \boldsymbol{\cdot}\big).
\end{equation}
More precisely, for $x, v$ as before, and every $u\in \E$,
\[
  \omega_{\gra,x}(v).u = \Grc_{\gra}(\phi_{\gra}(x))
                                      \big(T_x\phi_{\gra}(v), u\big).
\]
The forms $\{\oa\in \grL^1(U_{\gra},\cL(\E))\,|\, \gra\in I\}$ determine a connection form $\omega$ on $P$, since the compatibility of the Christoffel symbols
\begin{equation}  \label{Chriscomp}                %(5.2)
 \begin{aligned}
\Grc_{\gra} (x) &=  G_{\gra\grb}\left((\fb\circ\fa^{-1})(x)\right)
                                         \circ \big[DG_{\grb\gra} + \\
    &\qquad  \Grc_{\grb}\left((\fb\circ\fa^{-1})(x)\right)
                \circ \left(D(\fb\circ\fa^{-1})(x)\times
                                    \Grc_{\grb\gra}(x)\right)\big]
 \end{aligned}
\end{equation}
implies the compatibility condition \eqref{loccompcond}. The smooth maps
$G_{\gra\grb} \col \phi_{\grb}(\Uab)\ra \mathrm{GL}(\E)$ figuring in \eqref{Chriscomp} are given by $G_{\gra\grb}(\phi_{\grb}(x)):=\Phi_{\gra,x}\circ \Phi_{\grb,x}^{-1}$, $x\in \Uab$, where $\Phi_{\gra,x}\col E_x\ra \E$ ($\gra\in I$) is the toplinear isomorphism induced by the trivialization $(U_{\gra},\Phi_{\gra})$ of $E$.

Conversely, assume that $P(E)$ admits a connection with connection form $\omega$ and local connection forms $\{\oa\}_{\gra\in I}$. Setting
\begin{equation} \label{5.3}                       %(5.3)
   \Grc_{\gra}(z)(y, u) := [(\psi_{\gra}^*\omega_{\gra})_z](u)
 \end{equation}
for every $z\in \phi_{\gra}(U_{\gra})$, $y\in \B$ and $u\in \E$, with $\psi_{\gra} = \phi_{\gra}^{-1}$, we check that \eqref{loccompcond} implies \eqref{Chriscomp}, thus $\{\Grc_{\gra}\}$ determine a linear connection with Christoffel symbols the previous family.

As a result, the linear connections on $E$  are in bijective correspondence with the connections on $P(E)$. The same local tools can be used to connect related connections on vector bundles and the corresponding connections on the bundles of frames. However, this point of view is omitted because it relies on material outside the scope of the present note. For detailed discussions on this matter we refer to \cite{Vas2}, \cite{Vas3} and \cite{Vas4}.

\subsection{\rm\underline{Connections on inverse limit bundles}}
                           \label{Ss53} \     %\subsection{5.3}

\medskip\noindent We start with a few preliminary notions, referring for further details to \cite{Gal1}, \cite{Gal2}, \cite{Dod-Gal-Vas}:

Let $\{M^{i};\mu ^{ji}\}_{i,j\in \mathbb{N}}$ be a projective system of smooth manifolds modelled on the Banach spaces $\{\mathbb{E}^{i}\}_{i\in \mathbb{N}}$, respectively. A system of corresponding charts $\{(U^{i},\phi ^{i})\}_{i\in \mathbb{N}}$ will be called a \emph{(projective) limit chart} if and only if the limits $\varprojlim U^{i}$, $\varprojlim \phi ^{i}$ exist, and $\varprojlim U^{i}$, $\varprojlim \phi^{i}(\varprojlim U^{i})$ are open subsets of $\varprojlim M^{i}$, $\varprojlim \mathbb{E}^{i}$, respectively.

We call $M=\varprojlim \{M^{i};\mu ^{ji}\}_{i,j\in \mathbb{N}}$ a \emph{projective limit of Banach manifolds} (\emph{plb-manifold} for short), if the following conditions are satisfied:
\renewcommand{\labelenumi}{(\alph{enumi})}
\begin{enumerate}
        \item The model spaces $\{\mathbb{E}^{i}\}_{i\in \mathbb{N}}$ form a projective system with connecting morphisms
            \[ \rho^{ji}\col\mathbb{E}^{j}\lra \mathbb{E}^{i},
            \quad \forall\,i,j\in \N \; \text{with}\; j\geq i
            \]
            (thus the limit  $\mathbb{F}:=\varprojlim \mathbb{E}^{i}$ a Fr\'echet space).
        \item $M$ is covered by a family $\{(U_{\alpha},\phi _{a})\}_{\alpha \in I}$ of limit charts, where $U_{\alpha} = \varprojlim U_{\gra}^{i}$ and $\phi _{a}= \varprojlim \phi_{\gra}^{i}$ (the limits being taken with respect to $i\in \N$)
\end{enumerate}
Then $M=\varprojlim M^{i}$ has the structure of a Fr\'echet smooth manifold, modelled on $\mathbb{F}$. The necessary differential calculus in the models is based on the differentiation method of \cite{LE1} and \cite{LE2}.

\medskip Let  $\{G^{i};g^{ji}\}_{i,j\in \mathbb{N}}$ be a \emph{projective system of Banach-Lie groups} (\emph{plb-group}), i.e. a projective system of groups satisfying the conditions (a) and (b), where the connecting morphisms $g^{ji}\col G^j\ra G^i$ are Lie group homomorphisms. If $G^i$ is modelled on the Banach space $\mathbb{G}^i$ ($i\in \N)$, then the plb-manifold $G=\varprojlim G^{i}$ is a Lie group modelled on the Fr\'echet space $\mathbb{G}=\varprojlim \mathbb{G}^i$. Moreover, the Lie algebra $\mathfrak{g}$ of $G$, identified with
$\varprojlim T_{e^i}G^i = \varprojlim \mathfrak{g}^i$, is  a Fr\'echet space.

Assume that $(P,G,B,\pi)$ is principal bundle over a Banach space $B$, with structure group a plb-group as before. Then $P$ is a Fr\'echet  principal bundle such that $P=\varprojlim P^i$, and $\pi=\varprojlim \pi^i$, where $(P^i,G^i,B,\pi^i)$ is a Banach principal bundle, for every $i\in \N$. The local trivializations of $P$ are projective limits of appropriate trivializations of the factor bundles $P^i$.

A natural question here is how to define a connection (form) $\omega\in\Lambda^1(P,\mathfrak{g})$ on such a limit bundle, if each factor bundle $P^i$ is endowed with a connection $\omega^i\in \Lambda^1(P^i,\mathfrak{g}^i)$. It would be tempting to think that $\omega = \varprojlim \omega^i$. However, each $\omega^i$ is a (smooth) section of the linear map bundle $L(TP^i,\mathfrak{g}^i)$, $i\in \N$, thus the existence of a projective limit of the previous 1-forms cannot be expected, because the manifolds $\{L(TP^i,\mathfrak{g}^i)\}_{i,j\in \N}$ do not form, in general, a projective system.

We circumvent the impasse by assuming that, for every $j\geq i$, the connections $\omega^j$ and $\omega^i$  are \emph{$(p^{ji},g^{ji},\id_{B})$-related}, where $p^{ji}\col P^j\ra P^i$ are the connecting morphisms of the projective system $\{P^i\}_{i\in \N}$ generating $P$. Under this condition,

\smallskip\hfill \begin{minipage}{15cm}
\emph{the (point-wise) limit  $\omega(u) := \varprojlim (\omega^i(u^{i}))$ does exist, for every $u=(u^{i})_{i\in \N}\in P$, and $\omega$ is indeed a connection on $P$.}
\end{minipage}

\medskip\noindent The necessary technicalities and the detailed proofs of this claims are given in the aforementioned references.

Regarding the local connection forms, which are the object of our interest here, they enter the stage in the proof of the following statement, characterizing \emph{all} the connections of a projective limit bundle:

\medskip\hfill \begin{minipage}{15cm}
\emph{Every connection $\omega$ on a projective limit principal bundle $(P,G,B,\pi)$, as above, is necessarily the limit of a projective system of connections in the sense that $\omega(u) = \varprojlim (\omega^i(u^{i}))$, for every $u=(u^{i})_{i\in \N}\in P$.}
\end{minipage}

\medskip\noindent We outline the proof, actually based on a series of auxiliary results: Denote by $\left\{\sa\col U_{\gra}\ra P\right\} _{\gra\in I}$ the family of natural local sections of $P$ over a trivializing cover $\left\{ (U_{\gra},\Phi_{\gra})\right\}_{\gra\in I}$, and by
\[
   p^i \col P=\varprojlim P^i\lra P^i, \quad
   g^i \col G=\varprojlim G^i\lra G^i
\]
the canonical projections of the corresponding projective limits. It is proved that
\[
    L(TU_{\gra},\mathfrak{g})
                              =\vpl L(TU_{\gra},\mathfrak{g}^i),
\]
with projections
\[
    \grl^{i}\col L(TU_{\gra},\mathfrak{g})\ra
        L(TU_{\gra},\mathfrak{g}^i)\col f\mapsto T_{e}g^i\circ f,
                                                      \qquad i\in\N.
\]
Note that the preceding limit exists, because the linear map bundles involved have the same domain  $TU_{\gra}$, in contrast to the problematic $\{L(TP^i,\mathfrak{g}^i)\}_{i,j\in \N}$ discussed earlier. As a consequence, the local connection forms $\{\oa =\sa^{\ast }\omega\}_{\gra\in I}$ of $\omega$ induce the (smooth) $\mathfrak{g}^i$-valued 1-forms
\[
   \oa^{i}:=\grl^i\circ \oa\col U_{\gra}\ra
                  L(TU_{\gra},\mathfrak{g}^i);\qquad i\in \N,
\]
satisfying the ordinary compatibility condition of local connection forms in the corresponding bundle $P^i$. Therefore, each $P^i$ admits a connection $\omega^i$, with local connection forms precisely the given family $\{\oa^{i}\}_{\gra\in I}$. Moreover, $\omega^j$ and $\omega^i$ are $(p^{ji},g^{ji},\id_B)$-related, for every $i,j\in \N$ with $j\geq i$. The latter condition ensures that the 1-form $\bar\omega\in \Lambda^i(TP,\mathfrak{g})$, defined by
$\bar\omega(u) = \varprojlim (\omega^i(u^{i}))$, for every $u=(u^{i})_{i\in \N}\in P$, is a connection on $P$. We conclude that $\omega=\bar\omega$, since both connections have the same local connection forms.

\subsection{\rm\underline{Sheafification of local connection forms}}
                           \label{Ss54} \     %\subsection{5.4}

\medskip\noindent As local objects, the local connection forms are susceptible of a sheaf-theoretic globalization. For the necessary terminology and elementary properties of sheaf theory we refer, e.g., to \cite{Bredon}, \cite{Godement}, \cite{Warner}. More technical details on the subject matter of this subsection can be found  in \cite{Vas5}.

Starting with a finite-dimensional  principal bundle $(P,G,B,\pi_P)$, we consider the \emph{sheaf of germs of smooth sections of $P$}, denoted by $\cP \equiv (\cP,B,\pi)$. This is the sheaf generated by the presheaf $U\mapsto \Gamma(U,P)$, where $\Gamma(U,P)$ is the set of smooth sections of $P$ over $U$, with $U$  running the topology $\mathcal{T}_B$ of $B$. We also write
\[
      \cP = \mbox{\bf S}\big(U\longmapsto \Gamma(U,P)\big),
\]
where \mbox{\bf S} is the \emph{sheafification functor}. Since $\cP$ is a functional sheaf, thus \emph{complete}, it follows that
\begin{equation}   \label{Eq5.4}                               %(5.4)
     \cP(U) \cong \Gamma(U,P).
\end{equation}
The left-hand side of \eqref{Eq5.4} stands for the set of sections of $\cP$ over $U$. As  a matter of fact, to an $s\in \Gamma(U,P)$ there corresponds bijectively the section $\grs\equiv\tilde s\in\cP(U)$ with  $\grs(x)=\tilde s(x):=[s]_x\in \cP_x$, for every $x\in U$.

For convenience, we denote by $\cA$ the \emph{sheaf of germs of smooth $\R$-valued functions on $B$}, usually written as $\cC_B^{\infty}$. As in \eqref{Eq5.4},
\begin{equation}   \label{Eq55}                               %(5.5)
     \cA(U) = \cC_B^{\infty}(U) \cong C^{\infty}(U,\R),
                                          \qquad U\in \mathcal{T}_B.
\end{equation}

Likewise, we denote by $\cG$ (resp $\cL$) the \emph{sheaf of germs of smooth $G$-valued \emph(resp. $\G$-valued) maps on B}, where $\G\equiv T_eG$ is the Lie algebra of $G$, thus
\begin{equation}   \label{Eq56}                               %(5.6)
      \cG(U)\cong C^{\infty}(U,G), \quad \cL(U)\cong C^{\infty}(U,\G),
                                             \qquad U\in \mathcal{T}_B.
\end{equation}

There is a natural action of the sheaf of groups $\cG$ on the right of $\cP$, induced by the sheafification of the (presheaf) actions
\[
   \Gamma(U,P) \times C^{\infty}(U,G) \lra
   \Gamma(U,P) \col (s,g) \mapsto s\cdot g,
\]
for every $U\in \mathcal{T}_B$. Moreover, the local structure of the initial bundle $P$ implies that $\cP|_{U_{\gra}} \cong \cG|_{U_{\gra}}$, for an open cover $\cU = \{\Ua \sst B \, | \, \gra \in I\}$ of $B$ over which $P$ is trivial. Thus, according to the terminology of \cite{Grothendieck}, $\cP$ is a \emph{principal sheaf of structure type $\cG$ with structural sheaf $\cG$}.

Similarly, we obtain the sheaves of germs of ordinary ($\R$-valued)  and $\G$-valued 1-forms on $B$, denoted by $\Om$ and $\OmG$, respectively. Then
\begin{equation}   \label{Eq57}                               %(5.7)
 \OmG(U) \cong \grL^1(U,\G) \cong \Om(U)\otimes_{\cA(U)}\cL(U)
\end{equation}
(for the last isomorphism see, for instance, \cite[p.~81]{Halperin}). But $\cL\cong \cA^n$ ($n=\dim \G$), therefore
\[
 \Om(U)\otimes_{\cA(U)}\cL(U)\cong \left(\Om\otimes_{\cA}\cL\right)(U),
\]
from which, together with \eqref{Eq57}, we obtain
\begin{equation}   \label{Eq58}                               %(5.8)
 \OmL := \Om\otimes_{\cA}\cL \cong\OmG.
\end{equation}

With the notations of \eqref{Eq33}, we define the map
\begin{equation}   \label{Eq59}                               %(5.9)
  \partial_U \col C^{\infty}(U,G)\lra \grL^i(U,\G) \col
       f\mapsto\partial_U(f):=f^{-1}df,
\end{equation}
for every $U\in\mathcal{T}_B$. The properties of the Maurer-Cartan differential imply that
\begin{equation}   \label{Eq510}                               %(5.10)
 \partial_U(g\cdot h)=\Ad(h^{-1}).\partial_U(g)+\partial_U(h);
                                     \qquad g,h \in C^{\infty}(U,G).
\end{equation}
As a result, in virtue of the identifications \eqref{Eq56}--\eqref{Eq58}, we obtain the morphism of presheaves
\[
  \left\{\partial_U\col \cG(U)\lra\OmL(U)\right\}_{U\in\mathcal{T}_B},
\]
generating the morphism of sheaves of sets
\[
       \partial \col  \cG\lra \OmL.
\]
In addition, the ordinary adjoint representation determines the morphism of sheaves
\[
  \cA d\col \cG\lra \cA ut(\cL),
\]
generated by the morphisms
\[
   \Ad_U\col C^{\infty}(U,B) \lra \Aut(\cL|_U),
\]
defined in turn by
\[
   \big(\Ad_U(g)(f)\big)(x):= \big(\Ad(g(x))\big)(f(x)),
\]

\noindent for every $g\in C^{\infty}(U,B)$, $f\in\cC^{\infty}(V,\G)$, $x\in V$, and every open $V\sst U$.

By means of $\cA d$, the sheaf of groups $\cG$ acts in a natural way on (the left) of $\OmL$: For every open $U\sst B$, $g\in \cG(U)$, and any decomposable element $w\otimes u\in \Om(U)\otimes_{\cA(U)}\cL(U)\cong \OmL(U)$, we set
\[
   \cA d(g)\cdot (w\otimes u):= w \otimes \cA d(g)(u).
\]
Here we have employed the usual convention to denote by the same symbol both a sheaf morphism and the induced morphism between sections.  The previous equality, by linear extension, determines $\cA d(g)\cdot\theta$, for every $g\in \cG(U)$ and $\theta\in \OmL(U)$, thus leading to the desired action. Consequently, taking into account the definition of $\partial$, we obtain the equalities:
\begin{gather}                     %(5.11)-(5.12)
  \partial(a\cdot b) = \cA d(b^{-1})\cdot \partial(a) + \partial(b);
        \qquad (a,b)\in \cG\times_B\cG,\label{Eq511}\\
  \partial(a^{-1}) = \cA d(a)\cdot \partial(a), \qquad a\in\cG.
                                           \label{Eq512}
\end{gather}

To state the main results of this subsection, we denote by $\grs_{\gra} \in \cP(U_{\gra})$ the sections of $\cP$ corresponding to the natural sections $\sa\in \Gamma(U_{\gra},P)$ of $P$ by \eqref{Eq5.4}, and by $\gamma_{\gra\grb}\in \cG^{\infty}(U_{\gra\grb})$ the sections of $\cG$ corresponding to the transition functions $\gab\in \cC^{\infty}(U_{\gra\grb},G)$ of $P$. Finally, we denote by $\theta_{\gra}\in \OmL(U_{\gra})$ the sections corresponding to $\oa\in\grL^1(U,\G)$ by the identifications \eqref{Eq57}  and \eqref{Eq58}. It is worth noticing that the \emph{transition sections} $\{\gamma_{\gra\grb}\}_{\gra,\grb\in I}$ fully determine the structure of $\cP$.

With the previous notations in mind, the compatibility of the ordinary local connection forms $\{\omega_{\gra}\}$ transcribes to
\begin{equation}  \label{shloccompcond}                 %(5.13)
 \theta_{\grb} = \cA d(\gamma_{\gra\grb}^{-1}).\theta_{\gra} +
                  \partial(\gamma_{\gra\grb}), \qquad \gra,\grb \in I.
\end{equation}

\begin{Thm}
A family of sections $\{\theta_a\in \OmL(U_{\gra})\,|\,\gra\in I\}$, satisfying \eqref{shloccompcond}, determines a unique sheaf morphism $D\col \cP\ra \OmL$ such that
\begin{equation}  \label{Dequivar}                      %(5.14)
   D(w\cdot a)=  \cA d(a^{-1}).D(w) + \partial(a);
                                     \qquad (w,a)\in \cP\times_B\cG,
\end{equation}
and $D(\grs_{\gra})= \theta_{\gra}$, for every $\gra\in I$.                                     \end{Thm}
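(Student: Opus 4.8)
The plan is to reconstruct the sheaf morphism $D$ by gluing local data, exactly mirroring the classical construction of a global connection form from local connection forms via formula \eqref{Eq35}. First I would fix the trivializing cover $\cU = \{\Ua\}_{\gra\in I}$ over which $\cP|_{\Ua}\cong \cG|_{\Ua}$, so that every section $w\in\cP(V)$ with $V\sst\Ua$ can be written uniquely as $w = \grs_{\gra}\cdot g$ for some $g\in\cG(V)$ (this is the sheaf analogue of $p = \sa(\pi(p))\cdot\ga(p)$). Guided by \eqref{Dequivar} and the requirement $D(\grs_{\gra})=\theta_{\gra}$, I am forced to define, on $\cP|_{\Ua}$,
\[
  D_{\gra}(w) := \cA d(g^{-1}).\theta_{\gra} + \partial(g),
  \qquad w = \grs_{\gra}\cdot g,\ g\in\cG(V),\ V\sst\Ua .
\]
Since $\cP$, $\OmL$, $\cG$ are all sheaves and these operations ($\cA d$, $\partial$, the action, addition) are morphisms of sheaves, $D_{\gra}$ is automatically a morphism of sheaves over $\Ua$; one checks it satisfies \eqref{Dequivar} locally using \eqref{Eq511} (the cocycle-type identity for $\partial$) together with the compatibility of $\cA d$ with the $\cG$-action on $\OmL$, and one gets $D_{\gra}(\grs_{\gra})=\theta_{\gra}$ by taking $g=e$.

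The main step is to verify that the locally defined $D_{\gra}$ agree on overlaps $\Uab$, so that they glue to a single morphism $D\col\cP\ra\OmL$. Take $w\in\cP(V)$ with $V\sst\Uab$, and write $w = \grs_{\gra}\cdot g_{\gra} = \grs_{\grb}\cdot g_{\grb}$. Because $\grs_{\grb} = \grs_{\gra}\cdot\gamma_{\gra\grb}$ (the defining property of the transition sections), we get $g_{\gra} = \gamma_{\gra\grb}\cdot g_{\grb}$, hence by \eqref{Eq511}
\[
  \partial(g_{\gra}) = \cA d(g_{\grb}^{-1}).\partial(\gamma_{\gra\grb}) + \partial(g_{\grb}),
\]
and then a short computation substituting the compatibility relation \eqref{shloccompcond} for $\theta_{\gra}$ versus $\theta_{\grb}$ into $D_{\gra}(w) = \cA d(g_{\gra}^{-1}).\theta_{\gra}+\partial(g_{\gra})$ collapses it to $\cA d(g_{\grb}^{-1}).\theta_{\grb}+\partial(g_{\grb}) = D_{\grb}(w)$. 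This is the infinitesimal/sheaf-theoretic counterpart of the well-definedness computation already carried out for \eqref{Eq35} via \eqref{loccompcond}, so I expect it to be a routine—if slightly bookkeeping-heavy—algebraic manipulation. By the sheaf gluing axiom, the family $\{D_{\gra}\}$ determines a unique morphism $D\col\cP\ra\OmL$ restricting to each $D_{\gra}$.

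Finally I would promote the local identities to global ones: the equivariance \eqref{Dequivar} holds on each $\Ua$ and both sides are morphisms from $\cP\times_B\cG$ to $\OmL$, so by the separation axiom for the sheaf $\OmL$ they coincide globally; likewise $D(\grs_{\gra})=\theta_{\gra}$ is immediate from $D|_{\Ua}=D_{\gra}$. For uniqueness, suppose $D'$ is another morphism with the stated properties; over each $\Ua$, any section of $\cP$ has the form $\grs_{\gra}\cdot g$, and \eqref{Dequivar} together with $D'(\grs_{\gra})=\theta_{\gra}$ forces $D'(\grs_{\gra}\cdot g) = \cA d(g^{-1}).\theta_{\gra}+\partial(g) = D(\grs_{\gra}\cdot g)$, so $D'|_{\Ua}=D|_{\Ua}$ for all $\gra$, whence $D'=D$. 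The only genuine obstacle, beyond careful use of the identities \eqref{Eq511}--\eqref{Eq512} and \eqref{shloccompcond}, is making sure every operation used ($\cA d$, the left $\cG$-action on $\OmL$, $\partial$, and addition in $\OmL$) is indeed a morphism of sheaves so that $D_{\gra}$ is sheaf-continuous and the gluing is legitimate; these facts were all set up in the paragraphs preceding the statement, so the argument goes through.
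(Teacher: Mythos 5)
Your proposal is correct and follows essentially the same route as the paper: define $D$ locally by $D(\grs_{\gra}\cdot g)=\cA d(g^{-1}).\theta_{\gra}+\partial(g)$, use \eqref{shloccompcond} together with \eqref{Eq511} to check agreement on overlaps, and read off \eqref{Dequivar}, $D(\grs_{\gra})=\theta_{\gra}$ and uniqueness directly from this formula. The only (cosmetic) difference is that the paper works at the level of stalks and verifies continuity of the resulting map of \'etale spaces, whereas you work with sections over open sets and invoke the gluing axiom; the two formulations are equivalent here.
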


Before sketching the proof, let us observe that \eqref{Dequivar} describes the behavior of $D$ with regard to the action of $\cG$ on $\cP$, thus it can be thought of as the sheaf analog of \eqref{w2} in Section 2.

On the other hand, since it is easily proved that $D$ corresponds bijectively to a connection $\omega\equiv \{\oa\}$ on $P$, $D$ is called (by abuse of language)  a \emph{connection on $\cP$} (even on $P$!), and the sections $\{\theta_{\gra}\}$ are said to be the \emph{local connection forms of $D$}. The morphism $D$ gives an operator-like definition of principal connections. For a sheaf-theoretic description of connections on vector bundles we refer to \cite{Wells}.

\begin{proof}
Given the local sections $\{\theta_{\gra}\}$, we define  $D$ as follows: If $w$ is an arbitrarily chosen element of $\cP$ with $p(w)=x\in U_{\gra}$,
we set
\[
   D(w):= \cA d(b^{-1}_{\gra}).\theta_{\gra}(x) + \partial(b_{\gra}),
\]
where $b_{\gra}$ is the unique element in the fiber $\cG_x$ satisfying equality $w=\grs_{\gra}(x)\cdot b_{\gra}$. The compatibility condition \eqref{shloccompcond} ensures that $D$ is well defined.

$D$ is a continuous map: For $w$ and $x$ as before,  there are open neighborhoods $V\subseteq \cP$ and $U\subseteq B$ of $w$ and $x$, respectively, such that $p|_V \col V\ra U$ is a homeomorphism. Denote by $\grs$ its inverse and set $W=\grs(U\cap U_{\gra})$. Then there exists a unique $g\in \cG(U\cap U_{\gra})$, such that
\[
   D|_W = \big(\cA d(g^{-1}).\theta_{\gra} +
                                      \partial g\big)\big|_W.
\]
This proves the continuity of $D$ at an arbitrary $w\in \cP$, hence its continuity on $\cP$.

Equality $D(\grs_{\gra})=\theta_{\gra}$ is obvious from the definition of $D$. The same equality ensures the uniqueness of $D$
\end{proof}

The converse statement is also true; that is, a sheaf  morphism satisfying \eqref{Dequivar} determines a family $\{\theta_{\gra}\}$ satisfying the compatibility condition \eqref{shloccompcond}. Therefore, sheaf morphisms $D$ correspond bijectively to local sections $\{\theta_{\gra}\}$ as before. Because both quantities correspond to ordinary connections  and local connection forms, respectively, the terminology induced before the proof is fully justified.

\smallskip To describe related connections, we proceed as follows: A morphism of principal bundles (with the same base)
\[
   (f,\phi,\id_B)\col (P,G,B,\pi) \lra (P',G',B,\pi')
\]
determines, by sheafification, the sheaf morphisms (same notations as before, for the sake of simplicity)
\begin{gather*}
   f\col \cP \lra \cP', \quad \phi \col \cG \lra \cG', \quad
                                      \ophi\col \cL\lra \cL'.
\end{gather*}
Hence, two connections $D$ and $D'$ on $\cP$ and $\cP'$, respectively, are said to be \emph{$(f,\phi,\id_B)$-related} if $D'\circ f = (1\otimes\ophi)\circ D$; in other words, the following diagram is commutative.
\smallskip
\begin{diagram}
     \cP       &    \rTo^{f}              & \cP' \\
     \dTo^{D}  &                          & \dTo_{D'} \\
 \OmL =\Om\otimes_{\cA} \cL &\rTo^{\quad 1\otimes\ophi\quad} &
                    \Om\otimes _{\cA} \cL' =\Omega^1(\cL')
\end{diagram}

\smallskip\centerline{{\sc Diagram 4}}

\begin{Thm} \label{T52}
Two connections $D$ and $D'$ are $(f,\phi,\id_B)$-related if and only if the corresponding local connection forms (sections) $\{\theta_{\gra}\}$ and $\{\theta'_{\gra}\}$ satisfy condition
\begin{equation}
   (1\otimes\ophi)(\theta_{\gra}) = \cA d(\ha^{-1}).\theta'_{\gra}
                                     +\partial(\ha); \qquad \gra\in I,
\end{equation}
where $\ha\in \cG'(U_{\gra})\cong C^{\infty}(U_{\gra},G')$ are determined by $f(\grs_{\gra})=\grs'_{\gra}\cdot \ha$.
\end{Thm}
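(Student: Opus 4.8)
The plan is to mimic the argument of Theorem~\ref{T41}, transferring each step from the bundle level to the sheaf level, since $D$ (resp.\ $D'$) encodes exactly the ordinary connection $\omega\equiv\{\oa\}$ (resp.\ $\omega'\equiv\{\oa'\}$) and the sheaf morphisms $f,\phi,\ophi$ are the sheafifications of the corresponding bundle maps. First I would establish that $(f,\phi,\id_B)$-relatedness of $D$ and $D'$, i.e.\ commutativity of Diagram~4, is equivalent to $(f,\varphi,\id_B)$-relatedness of $\omega$ and $\omega'$ in the sense of Section~\ref{S4}: evaluating $D'\circ f = (1\otimes\ophi)\circ D$ on stalks and using that both $D$ and the identification \eqref{Eq57}--\eqref{Eq58} are induced by presheaf maps, one reads off $f^*\theta' = \ovl\varphi\omega$ (the pull-back form identity of \eqref{Eq41}) on each $U_{\gra}$, and conversely. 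Given this dictionary, Theorem~\ref{T41} already yields that relatedness is equivalent to
\[
  \ovl\varphi\oa = \Ad(\ha^{-1}).\oa' + \ha^{-1}d\ha,\qquad \gra\in I,
\]
with $\ha\col U_{\gra}\ra G'$ the smooth maps satisfying $f\circ\sa = s'_{\gra}\cdot\ha$.

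Next I would translate this last displayed equality into the asserted sheaf identity. Under the identifications $\cG'(U_{\gra})\cong C^{\infty}(U_{\gra},G')$ and $\OmL(U_{\gra})\cong\grL^1(U_{\gra},\G)$ (and its primed analog via $\cL'$), the section $\ha\in\cG'(U_{\gra})$ corresponds to the map $\ha$, the section $\theta_{\gra}$ to $\oa$, and $\theta'_{\gra}$ to $\oa'$; moreover $\partial(\ha)$ is by definition \eqref{Eq59} the section corresponding to $\ha^{-1}d\ha$, $\cA d(\ha^{-1}).\theta'_{\gra}$ corresponds to $\Ad(\ha^{-1}).\oa'$ by the construction of $\cA d$, and $(1\otimes\ophi)(\theta_{\gra})$ corresponds to $\ovl\varphi\oa$ because $\ophi$ sheafifies $\ovl\varphi$ and $1\otimes\ophi$ acts on decomposable elements $w\otimes u$ by $w\otimes\ophi(u)$. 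Substituting these correspondences term by term converts the bundle identity into
\[
  (1\otimes\ophi)(\theta_{\gra}) = \cA d(\ha^{-1}).\theta'_{\gra} + \partial(\ha),
\]
and the equivalence $f(\grs_{\gra}) = \grs'_{\gra}\cdot\ha$ (as sections of $\cP'$) with $f\circ\sa = s'_{\gra}\cdot\ha$ (as sections of $P'$) is immediate from \eqref{Eq5.4} and the fact that the $\cG'$-action on $\cP'$ is the sheafification of the $G'$-action on $Q'$. This gives both directions at once, since every implication used is reversible.

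The one point that needs genuine care — and which I expect to be the main obstacle — is the bookkeeping of the identifications in \eqref{Eq57}--\eqref{Eq58}, i.e.\ checking that the isomorphism $\OmL(U_{\gra})\cong\grL^1(U_{\gra},\G)$ is compatible with \emph{all three} operations appearing in the statement simultaneously: the action of $\cG'$ via $\cA d$, the Maurer--Cartan operator $\partial$, and the morphism $1\otimes\ophi$. For $\partial$ this is the content of \eqref{Eq510}--\eqref{Eq511}; for $\cA d$ it follows from the pointwise definition of $\Ad_U$; for $1\otimes\ophi$ one must verify that tensoring the de Rham factor with the identity and the $\cL$-factor with $\ophi$ really reproduces post-composition with $\ovl\varphi$ at the level of $\G$-valued forms, which is straightforward once one writes a $\G$-valued $1$-form locally as $\sum w_k\otimes u_k$ with $w_k$ scalar forms and $u_k\in\G$. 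Once this compatibility is in place, the proof is a routine term-by-term translation, exactly as in the already-proved Theorem~\ref{T41}; I would therefore present it concisely, invoking Theorem~\ref{T41} for the hard analytic content and spending the remaining lines only on the translation.
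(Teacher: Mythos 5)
Your proposal is correct and follows exactly the route the paper indicates: the paper gives no detailed proof of Theorem~\ref{T52}, saying only that the argument is ``the sheafification of that of Theorem~\ref{T41}, taking into account certain subtleties regarding tensor products,'' and your reduction to Theorem~\ref{T41} via the dictionary $D\leftrightarrow\omega$ (using \eqref{Eq5.4} and \eqref{Dequivar}), with the compatibility of the identifications \eqref{Eq57}--\eqref{Eq58} with $\cA d$, $\partial$ and $1\otimes\ophi$ singled out as the delicate point, is precisely that plan. The only step worth making explicit is that $s^*(f^*\theta')=s^*(\ovl\varphi\,\omega)$ for all local sections $s$ does recover $f^*\theta'=\ovl\varphi\,\omega$ on all of $TP$, because both forms automatically agree on vertical vectors via \eqref{w1}.
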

The proof, very roughly, can be thought of as the sheafification of that of Theorem~\ref{T41}, taking into account certain subtleties regarding tensor products.

Similarly, we can state and prove the sheaf-theoretic analogs of Theorem~\ref{T42}, Corollary~\ref{C44} and Theorem~\ref{T46}. In the same vein, we obtain the analogs of the results of \S\,\ref{Ss51}, but in this case we need to introduce the notions of associated principal and vector sheaves (cf. \cite[Chapters 5,7]{Vas5}).

The results of this subsection hold true also in the infinite-dimensional (Banach) framework. However, computations involving the sheaf of germs of $\mathfrak{g}$-valued 1-forms on $P$, $\OmL$, should be carried out without use
of the tensor representation \eqref{Eq58}. The latter is valid only for \emph{locally free sheaves}, as is the case of the sheaves considered here, which are generated by finite-dimensional manifolds and Lie groups.

Concluding, we note that the present sheaf-theoretic approach to ordinary connections serves as a motivation (and a fundamental example) of the abstract theory of connections on arbitrary principal sheaves, elaborated in \cite{Vas5}.

\end{document}